	\newcommand{\ts}{\thinspace} 
		\newtheoremstyle{Format1}
		{\topsep}   
		{\topsep}   
		{\normalfont}  
		{0pt}       
		{\bfseries} 
		{\newline}  
		{5pt plus 1pt minus 1pt} 
		{}          
		\newtheorem{Def}{Definition}[section]       
            \newtheorem{Lem}[Def]{Lemma}                
            \newtheorem{Thm}[Def]{Theorem}              
            \newtheorem{rem}[Def]{Remark}               
        \DeclarePairedDelimiter\abs{\lvert}{\rvert}%
        \DeclarePairedDelimiter\norm{\lVert}{\rVert}%
        \let\oldabs\abs
        \def\abs{\@ifstar{\oldabs}{\oldabs*}}
        \let\oldnorm\norm
        \def\norm{\@ifstar{\oldnorm}{\oldnorm*}}
        \title{Comparing regression curves - an $L^1$-point of view}
\author{
  {\normalsize Patrick Bastian, Holger Dette,  Lukas Koletzko} \\
{\normalsize  Ruhr-Universit\"at Bochum} \\
{\normalsize  Fakult\"at f\"ur Mathematik} \\
{\normalsize  44780 Bochum, Germany}
\and 
{\normalsize  Kathrin Möllenhoff} \\
{\normalsize  Heinrich-Heine-Universit\"at D\"usseldorf} \\
{\normalsize  Mathematisches Institut} \\
{\normalsize  40225 D\"usseldorf, Germany}
}
\date{}
\begin{document}
     
        \maketitle

        \begin{abstract}
        In this paper we  compare two regression curves by measuring their difference by the area between the two curves, represented by their $L^1$-distance.
        We develop  asymptotic confidence intervals for this measure and statistical tests to investigate the similarity/equivalence of the two curves. 
        Bootstrap methodology specifically designed for equivalence testing is developed to obtain procedures with good finite sample properties and its consistency is rigorously proved.
        The finite sample properties are investigated by means of a small simulation study.
        \end{abstract}

\medskip
  \noindent
  Keywords:   equivalence testing, comparison of curves, bootstrap, directional Hadamard differentiability

\noindent AMS Subject classification:  62F40;  62E20; 62P10

        	\section{Introduction}
  \def\theequation{1.\arabic{equation}}	

The comparison of  regression curves is
a common problem in applied regression analysis. Usually these curves correspond to the means of a control  and a treatment  outcome where the predictor variable is an adjustable parameter, such as the  time or  a dose level, 
and an important question is  whether the difference between  the two curves is practically irrelevant. Borrowing ideas from testing for  bioequivalence in population pharmacokinetics \citep[see, for example][]{chowliu1992,haustepig2007}
numerous authors  have  addressed the problem of establishing 
the similarity between two regression functions by testing  
hypotheses of the form 
\begin{align} \label{hd11}
H_0: d \geq \epsilon  \text{~~~ versus  ~~~}
H_1: d < \epsilon ,
\end{align}
where $d$ is a distance between the curves (which vanishes, if they are identical) and $\epsilon $ is a threshold, which defines  when the difference between the two  curves is considered as practically irrelevant.

Hypotheses of this type  
             have found considerable interest in the literature. 
             In many applications  the sample sizes are small  such that  nonparametric approaches are prohibitive and the  relations  between predictor and response  for the two groups are modelled by   nonlinear regression models with low dimensional parameters. 
              \cite{liubrehaywynn2009}
proposed tests for comparing linear models, while \cite{gsteiger2011}
suggested a bootstrap test for  nonlinear models.
The tests in these papers  are  based on the  intersection-union principle \citep[see, for example,][]{berger1982} and
a  confidence band for the difference of the two regression models
\citep[see also][]{liuhaywyn2007}.
\cite{dette2018equivalence} pointed out that, by construction,  these tests  are very conservative and proposed an alternative approach, which  has been successfully applied by 
\cite{mollenhoff2022testing}, 
among others. A common feature of the publications in this field 
consists in the fact that all methods use the maximal deviation 
$d_{\infty} :=  \sup_{x} |{m_{1}(x)-m_{2}(x)}   |
$
for the comparison of the curves, say 
$m_1$ and $m_2$ (here $x$ denotes the predictor). While this metric has some attractive features such as good interpretability 
and a simple view of large distances, it is often
too restrictive as one  is interested in a ``worst case'' scenario on a local scale. More precisely, if one uses the metric of maximum deviation between the two curves 
one is not  able to decide for  similarity 
if the curves are very similar for most points $x \in {\cal X}$, except for a ''small'' region. 
In such cases  an ''average'' 
\begin{align*}
d_{1}  := \int  | {m_{1}(x)-m_{2}(x)} |  dx 
\end{align*}
measuring the area between the curves might have advantages
and  in this paper we develop statistical inference tools if two regression curves are compared on the basis of  this $L^1$-distance.

Our interest for this distance stems from the fact that the area under the curve (AUC) is  a quite  popular measure in biostatistics. For example, according to current guidelines by regulation authorities 
both in the US and the EU  bioequivalence between a reference  and a test  product 
is to be assessed based on the comparison of their respective area under the time-concentration 
curves  and  their  maximal concentrations (see \cite{food2003guidance,ema} for details on that).
For  analyses of clinical trials with a time-to-event outcome \cite{Royston2013} and \cite{Zachary2019} proposed the 
restricted mean survival time (RMST) as a possible alternative tool to the commonly used  hazard
ratio to estimate the treatment effect. 
By introducing this measure the authors address the well-known issue of assuming proportional hazards between different arms of the trial- an assumption, which is often unrealistic or obviously violated, but, however, only rarely assessed in practice (see \cite {jachno2019non} for a recent overview). 
The RMST, that is the area under the survival curve up to a specific time point, comes along without any assumption on the shape of the hazard ratio by calculating a treatment effect as the difference in RMST. 
We also refer to \cite{coxczan2016} who investigated the $L^1$-distance for comparing survival distributions.
Moreover,  an important performance metric for assessing how well  clinical risk prediction models distinguish between patients with and without a health outcome of interest is the area under receiver operating characteristic (ROC) curves (short AUROC), a tool of huge practical interest \citep[see][among others]{pepeetal2013,helleretal2016}. 
Also apart from medical questions, the AUROC is a common used tool in machine learning, arising whenever classifiers are evaluated or compared to each other regarding their power, see, for example \cite{bradley1997use}.

As pointed out by  \cite{coxczan2016} the choice of $L^1$-distance for the comparison of curves poses several mathematical challenges, which are caused by the fact that the mapping $f \to \int_{\cal X} |f(x)| dx $  is in general not (Hadamard-)differentiable. 
These authors considered the distance
$\int | S_1f_2 (x)  - S_2f_1 (x) | dx 
$ and  
restricted themselves to the situation where $S_1f_2 \geq S_2f_1$
to solve the differentiability  problem 
(in this case  the absolute value in the integral  can be omitted). In the context of  
testing for the equivalence of multinomial distributions 
\cite{ostr2017}  proposed  to
use a smooth approximation of the $L^1$-norm
 to avoid the differentiability problem.

Our approach for investigating the similarity between the regression functions $m_1$ and $m_2$ avoids such approximations
and does not require that one regression function is larger than the other one. It  is based on a direct  estimate of the distance $d_{1}$ whose asymptotic properties are investigated  in Section  
 \ref{sec2}. The results are used for the construction  of asymptotic confidence intervals for $d_1$ and a corresponding test for the hypotheses  \eqref{hd11} by duality principles. In order to 
obtain less conservative tests with good properties for small sample sizes  we propose a constrained bootstrap test in  Section \ref{sec3}. Section \ref{sec4} is devoted to a small simulation study illustrating good finite sample properties of the bootstrap confidence intervals and tests. Finally, all proofs and technical details are given in Section \ref{appendix}.

    	    \section{Comparing the area between the curves}
    	    \label{sec2} 
  \def\theequation{2.\arabic{equation}}	
  \setcounter{equation}{0}
  
        	We consider two independent samples of $n_1$ and  $n_2$ observations. In each group  ($\ell=1,2$) there exist $k_\ell$ different covariates, say  $x_{\ell,1},  \ldots  , x_{\ell,k_{\ell}}$,  such that  at each covariate $x_{\ell,i}$ $n_{\ell,i}$
        	independent identically distributed observations 
        	$\{   Y_{\ell,i,j}  : j=1,\ldots , n_{\ell, i} \} $ are available such that $n_\ell = \sum_{i=1}^{k_{\ell} } n_{\ell,i} $  ($\ell=1,2$). The total sample size is denoted by   $n= n_{1} + n_{2}$.
        	We assume  that the covariates vary  in a set  $\mathcal{X} \subset \mathbb{R}^{d}$
        	(for some $d \in \mathbb{N}$), and that the 
       relation between the covariates  and responses can be represented by a non-linear regression model
       of the form
        	\begin{eqnarray}
        	    \label{l1}
        	    Y_{\ell,i,j} = m_{\ell}(x_{\ell,i},\beta_{\ell}) + \eta_{\ell,i,j}, \ \ j = 1, \ldots ,n_{\ell,i}, \ i = 1, \ldots ,k_{\ell},
        	\end{eqnarray}
        	    where  $m_{\ell}(\cdot,\beta_{\ell}) \in \ell^{\infty}(\mathcal{X})$  is  the regression function with parameters $\beta_{\ell} \in \mathbb{R}^{p_{\ell}}$, $p_{\ell} \in \mathbb{N}$ and  $\ell^{\infty}(\mathcal{X})$ denotes the space of bounded real-valued functions
        	    $f:{\cal X} \to \mathbb{R}$. In model 
        	    \eqref{l1} the quantities 
        	$\{ \eta_{\ell,i,j} : 
        	 j = 1, \ldots ,n_{\ell,i}, \ i = 1, \ldots ,k_{\ell} \} $ denote independent identically distributed random variables with mean $ 0$  and variance $\sigma_{\ell}^2 >0 $ $, \ \ell =1,2$.
    
        	 In this paper we are interested in the similarity between the regression functions $m_1$ and $m_2$, where the distance between the two functions is measured by the $L^1$-norm. More precisely, 
          we consider  the similarity parameter
        	\begin{eqnarray}
        	\label{l2}
        	    d_{1} = d_{1}(\beta_{1},\beta_{2}) \coloneqq \int_{\mathcal{X}} \abs{m_{1}(x,\beta_{1})-m_{2}(x,\beta_{2})} \ts dx  ~, 
        	\end{eqnarray}
        	and develop confidence intervals for $d_1$ and statistical tests for the hypotheses 
        	\begin{eqnarray}
        	\label{l3}
        	    H_{0}: d_{1}(\beta_{1},\beta_{2}) \geq \epsilon \quad \text{ vs. } \quad H_{1}:  d_{1}(\beta_{1},\beta_{2}) < \epsilon ,
        	\end{eqnarray}
            where $ \epsilon >0 $ is a pre-specified constant.
            Note that the rejection of $H_0$ in \eqref{l3} allows to decide at a controlled type I  error  that the area between the two curves is smaller than a given threshold.

        	As pointed out in the introduction 
         the choice of $L^1$-distance for the comparison of curves poses several mathematical challenges, which are caused by the fact that in contrast to the $\sup$- and the $L^2$-norm, the mapping $f \to \int_{\cal X} |f(x)| dx $ from $l^\infty ({\cal X} )$ onto
        	$\mathbb{R}$ is in general not (Hadamard-)differentiable. 
Our approach for investigating the similarity between the regression functions $m_1$ and $m_2$   is based on a direct  estimate of the distance $d_{1}(\beta_{1},\beta_{2})$. To be precise, let 
  $\hat \beta_1$ and $\hat{\beta}_{2}$ denote appropriate  estimates of  the parameters in model
  $m_1(\cdot, \beta_1) $ and   $m_2(\cdot, \beta_2) $
  obtained from the samples 
  $\{Y_{1,i,j}  | j = 1, \ldots ,n_{1,i}, \ i = 1, \ldots ,k_{1}, \}$
  and $\{Y_{2,i,j}  | j = 1, \ldots ,n_{2,i}, \ i = 1, \ldots ,k_{2}, \}$, respectively (precise assumptions on the properties of these estimates are given in the appendix). The estimate of the area between the curves $d_1$ in \eqref{l2} is then defined by 
  \begin{eqnarray}
  \label{l4}
      \hat d_{1}=d_{1}(\hat{\beta}_{1},\hat{\beta}_{2}) = \int_{\mathcal{X}} | m_{1}(x,\hat{\beta}_{1}) - m_{2}(x,\hat{\beta}_{2}) |  dx,
  \end{eqnarray}
 and we will show 
  that the normalized statistic  $\sqrt{n}(\hat d_{1} - d_{1}) $  converges weakly to a  random variable $ T$
     with non-degenerate distribution.
For this prupose we  introduce the set 
        		\begin{eqnarray}
        		    \label{h1} 
        		{\cal N} := \{ x \in {\cal X} |~ m_{1} (x, \beta_1) - m_{2} (x, \beta_2)= 0 \}   
        				\end{eqnarray}
        				as the set of points, where the two regression functions coincide. 
           Throughout this paper the symbol $	\xrightarrow{d}$ 
 means weak convergence in distribution.

        	\begin{Thm}
        	\label{thm1}
        		\textit{Suppose that Assumption 1-7 in the appendix are satisfied, in particular $n_1,n_2 \to \infty $ such that $ {n/n_1} \rightarrow \kappa  $  $\in (1, \infty) $ 
          and  $n_{\ell, i}/n_\ell \rightarrow \zeta_{\ell, i}
          \in (0,1) $ for $i=1, \ldots ,k_\ell$,  $\ell=1,2$. The statistic $\hat d_{1}$
        		defined in \eqref{l4} satisfies
        		\begin{eqnarray}
        		\label{h4}
        		\sqrt{n}(\hat d_{1}-d_{1}) & 
        		\xrightarrow{d} T: = \int_{
        		{\cal N}^c} \operatorname{sgn}(m_{1}(x,\beta_{1}) - m_{2}(x,\beta_{2})) \ G(x) \ dx  + \int_{
        		{\cal N} } \abs{G(x)} \ dx \ , ~~~~~~~~
         		\end{eqnarray}
         		where
         		$\{G(x)\}_{x \in \mathcal{X}}$ is a centred Gaussian process in $\ell^{\infty}(\mathcal{X})$ defined  by 
         		\begin{eqnarray*}
         		    G(x) = \Big (\dfrac{\partial}{\partial b_{1}}m_{1}(x,b_{1})\Big  |_{b_{1}=\beta_{1}}\Big )^{\top}\sqrt{\kappa }\Sigma_{1}^{-1/2}Z_{1}-\Big (\dfrac{\partial}{\partial b_{2}}m_{2}(x,b_{2}) \Big |_{b_{2}=\beta_{2}}\Big )^{\top}\sqrt{\dfrac{\kappa }{\kappa  - 1}}\Sigma_{2}^{-1/2}Z_{2},
         		\end{eqnarray*}
         		 $Z_{1}$ and $Z_{2}$ are  $p_1$ and $p_2$-dimensional standard normal distributed random variables, respectively, 
         	and the matrices $\Sigma_1$ and $\Sigma_2$ are defined by 
         	\begin{eqnarray*}
         	    \Sigma_\ell = \dfrac{1}{\sigma_{\ell}^2} \sum_{i=1}^{k_{\ell}} \zeta_{\ell, i} \Big(\dfrac{\partial}{\partial b_{\ell}} m_{\ell}(x_{\ell,i},b_{\ell})\Big|_{b_{\ell}=\beta_{\ell}}  \Big)\ts \Big( \dfrac{\partial}{\partial b_{\ell}} m_{\ell}(x_{\ell,i}, b_{\ell})\Big |_{b_{\ell}=\beta_{\ell}}\Big)^\top ~~~~(l=1,2). ~~~~
         	\end{eqnarray*}
         	}
        	\end{Thm}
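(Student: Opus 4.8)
The plan is to write $\hat d_1 = \phi(\hat f)$ and $d_1 = \phi(f_0)$, where $\phi:\ell^{\infty}(\mathcal X)\to\mathbb R$ is the map $\phi(f)=\int_{\mathcal X}\abs{f(x)}\,dx$ and $f_0:=m_1(\cdot,\beta_1)-m_2(\cdot,\beta_2)$, $\hat f:=m_1(\cdot,\hat\beta_1)-m_2(\cdot,\hat\beta_2)$ denote the true and estimated difference curves. Since $\phi$ is not Hadamard differentiable, the ordinary functional delta method is unavailable, and the argument rests on two ingredients that I would establish separately and then combine through the delta method for \emph{directionally} Hadamard differentiable maps (in the sense of Shapiro, D\"umbgen, or Fang and Santos): first, weak convergence of the normalized plug-in process $\sqrt n(\hat f-f_0)$ to the Gaussian limit $G$ in $\ell^{\infty}(\mathcal X)$; and second, directional Hadamard differentiability of $\phi$ at $f_0$ with the derivative producing the two integrals in \eqref{h4}.

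For the first ingredient I would start from the asymptotic normality of the estimators supplied by the appendix assumptions, namely $\sqrt{n_\ell}(\hat\beta_\ell-\beta_\ell)\xrightarrow{d}\Sigma_\ell^{-1/2}Z_\ell$, and use the independence of the two samples together with $n/n_1\to\kappa$ and $n/n_2\to\kappa/(\kappa-1)$ to obtain the joint limit $\sqrt n(\hat\beta_1-\beta_1,\hat\beta_2-\beta_2)\xrightarrow{d}(\sqrt\kappa\,\Sigma_1^{-1/2}Z_1,\sqrt{\kappa/(\kappa-1)}\,\Sigma_2^{-1/2}Z_2)$. A first-order Taylor expansion of $m_\ell(x,\hat\beta_\ell)$ about $\beta_\ell$, uniform in $x\in\mathcal X$, then gives $\sqrt n(\hat f-f_0)(x)=(\partial_{b_1}m_1(x,\beta_1))^{\top}\sqrt n(\hat\beta_1-\beta_1)-(\partial_{b_2}m_2(x,\beta_2))^{\top}\sqrt n(\hat\beta_2-\beta_2)+r_n(x)$ with $\sup_x\abs{r_n(x)}=o_P(1)$, where $\partial_{b_\ell}m_\ell(x,\beta_\ell)$ abbreviates the gradient $\tfrac{\partial}{\partial b_\ell}m_\ell(x,b_\ell)|_{b_\ell=\beta_\ell}$. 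As the leading term is a fixed, continuous-in-$x$ linear image of a finite-dimensional random vector, weak convergence in $\ell^{\infty}(\mathcal X)$ to $G$ follows by the continuous mapping theorem applied to the bounded linear map $\mathbb R^{p_1+p_2}\to\ell^{\infty}(\mathcal X)$, provided the gradients $x\mapsto\partial_{b_\ell}m_\ell(x,\beta_\ell)$ are bounded and continuous on $\mathcal X$, which the regularity assumptions guarantee.

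For the second ingredient I would verify that $\phi$ is Hadamard directionally differentiable at $f_0$ tangentially to all of $\ell^{\infty}(\mathcal X)$, with derivative $\phi'_{f_0}(h)=\int_{\mathcal N^c}\operatorname{sgn}(f_0(x))\,h(x)\,dx+\int_{\mathcal N}\abs{h(x)}\,dx$. For $t_n\downarrow 0$ and $h_n\to h$ in sup-norm, the integrand $g_n(x):=t_n^{-1}(\abs{f_0(x)+t_nh_n(x)}-\abs{f_0(x)})$ satisfies $\abs{g_n(x)}\le\abs{h_n(x)}\le\norm{h_n}_\infty$ by the triangle inequality, hence is dominated by a constant, which is integrable because $\mathcal X$ has finite Lebesgue measure. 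Pointwise, on $\mathcal N^c$ the sign of $f_0+t_nh_n$ eventually equals $\operatorname{sgn}(f_0)$, so $g_n(x)\to\operatorname{sgn}(f_0(x))h(x)$, whereas on $\mathcal N$ one has $g_n(x)=\abs{h_n(x)}\to\abs{h(x)}$; dominated convergence then yields the stated derivative. I would stress that this derivative is linear only on $\mathcal N^c$ and acquires the nonlinear term $\int_{\mathcal N}\abs{h}$ whenever $\mathcal N$ has positive Lebesgue measure, which is precisely why $\phi$ fails to be fully Hadamard differentiable and why $T$ is in general non-Gaussian.

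Combining the two ingredients through the directional delta method yields $\sqrt n(\hat d_1-d_1)=\sqrt n(\phi(\hat f)-\phi(f_0))\xrightarrow{d}\phi'_{f_0}(G)=T$, which is \eqref{h4}. I expect the main obstacle to be the uniform control of the remainder $r_n$ in the second step together with the rigorous passage from finite-dimensional to process-level weak convergence in $\ell^{\infty}(\mathcal X)$, since this is where the smoothness of the regression models and the geometry of $\mathcal X$ genuinely enter. By contrast, the directional differentiability of the third step, although the conceptual heart of the matter, is technically clean once the triangle-inequality domination is in place, and the delta method for directionally differentiable maps then applies off the shelf.
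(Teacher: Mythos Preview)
Your proposal is correct and follows essentially the same strategy as the paper: establish weak convergence of the difference process $\sqrt{n}(\hat f - f_0)$ to $G$ in $\ell^{\infty}(\mathcal X)$, verify directional Hadamard differentiability of $f\mapsto\int_{\mathcal X}|f|$ with the stated derivative, and conclude via the delta method for directionally differentiable maps (the paper cites Shapiro's Theorem~2.1). The only notable technical difference is that the paper checks the directional derivative with respect to the $L^1$-norm on $\ell^{\infty}(\mathcal X)$ (decomposing $\Phi=\Phi_1\circ\Phi_2$ with $\Phi_2(f)=|f|$, then using a subsequence-plus-uniform-integrability argument because $L^1$ convergence of $h_n$ does not give pointwise convergence), whereas your sup-norm route yields pointwise convergence of the integrands immediately and lets dominated convergence apply directly---a slightly cleaner execution of the same idea.
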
	
        	If  the distribution of $T$ in \eqref{h4}  would be known and $q_{\alpha}$  denotes  the  corresponding  $\alpha$-quantile, it follows from   Theorem \ref{thm1}  that  the  
         coverage probability of the  ``oracle'' confidence interval
        $ [0 , \hat d_1  - \frac{q_{\alpha}}    {\sqrt{n}} )$ 
        for the parameter $d_1$ converges with increasing sample size  to $ 1 - \alpha$. 
        Similarly, a simple calculation shows that the test, which
        rejects the null hypothesis, whenever
      $  	
        	    \label{h2}
        	    \hat d_1 < \epsilon + \frac{q_{\alpha}}{\sqrt{n}},
       $ 	
        is a consistent asymptotic level $\alpha$ test for the hypotheses
        	\eqref{l3}.

 However, the distribution of the limiting random variable $T$  in \eqref{h4} is not easily accessible, 
      because it depends on certain unknown nuisance parameters, in particular on the set ${\cal N} $ of points, where the two functions $m_1$ and $m_2$ coincide. The unknown covariance matrices
      $\Sigma_1$ and $\Sigma_2$, which essentially define the Gaussian process $\{G(x)\}_{x \in \mathcal{X}}$,  can be estimated in a rather straightforward manner by
      \begin{eqnarray}
          \label{h6}
          \hat{\Sigma}_\ell  = \dfrac{1}{\hat{\sigma}_{\ell}^2} \sum_{i=1}^{k_{\ell}}{ n_{\ell, i} \over n_{\ell}} \Big(\dfrac{\partial}{\partial b_{\ell}} {m}_{\ell}(x_{\ell,i},b_{\ell})\Big|_{b_{\ell}=\hat{\beta}_{\ell}} \Big) \ts \Big( \dfrac{\partial}{\partial b_{\ell}} {m}_{\ell}(x_{\ell,i}, b_{\ell})\Big|_{b_{\ell}=\hat{\beta}_{\ell}}\Big)^\top~,
      \end{eqnarray}
where 
$\hat\sigma_{1}^2$ and $\hat\sigma_{2}^2$ are consistent estimators of the variances $\sigma_{1}^2$ and $\sigma_{2}^2$, respectively.  
      On the other hand, the estimation of the set ${\cal N} $ is more difficult. For a constant $c >0$ we  define an estimate by 
      \begin{eqnarray}
          \label{h55}
          \hat{\cal N} = \Big \{ x \in {\cal X}~ \Big |
          ~ \big |  m_{1}(x,\hat{\beta_{1}}) - m_{2}(x,\hat{\beta_{2}}) \big |  <  c \sqrt{\frac{\log n}{n}} \Big \}~.
      \end{eqnarray}
        Consequently, let $\hat G$ denote the process $G$ introduced in Theorem \ref{thm1}, where the 
        parameter $\beta_\ell$ and the 
        matrix  $\Sigma_\ell$  have  been replaced by  their estimates  $\hat \beta_l$
and    $ \hat{\Sigma}_\ell $, respectively  and $\kappa$ by $n/n_1$. Then we define the random variable
    \begin{eqnarray}
   	\label{h7}
        		\hat{T}: = \int_{
        	\hat{\cal N}^c } \operatorname{sgn}(m_{1}(x,\hat \beta_{1}) - m_{2}(x,\hat \beta_{2})) \ \hat   G(x) \ dx  + \int_{
        	\hat{\cal N} }  | \hat G(x) |  \ dx \ ,
          		\end{eqnarray}  
         		and denote by $\hat q_{0,\alpha}$ the  $\alpha$-quantile  of the corresponding distribution conditional on $\hat \beta_1, \hat \beta_2$, which can easily be simulated.  We now define 
         		a confidence interval 
         		    \begin{eqnarray}
               \label{conf}
          \hat I_n := \Big [0, \hat{d}_1 - \dfrac{\hat q_{0,\alpha}}{\sqrt{n}} \Big ),
      \end{eqnarray}
         		for the $L^1$-distance  $d_1$  and, using the duality between confidence intervals and statistical tests
         \citep[see, for example][]{aitchison1964},   we propose 
         to reject the null hypothesis in
       \eqref{l3}, whenever
       $\hat  I_n \subset [0, \epsilon )$, which is equivalent to 
        \begin{eqnarray}
        	    \label{h88}
        	    \hat d_1 < \epsilon + \dfrac{\hat q_{0,\alpha}}{\sqrt{n}}~.
    \end{eqnarray} 
  
    We then obtain the following theorem on the significance level and the power of the above procedure.

      \begin{Thm}  \label{thm22} 
      {\it If the assumptions of Theorem \ref{thm1} are satisfied, then  \eqref{conf} defines an asymptotic confidence interval for the quantity $d_1$, that is
      \begin{align*}
          \lim_{n \rightarrow \infty} \mathbb{P}\big( d_1 \in \hat{I}_n \big) =  \lim_{n \rightarrow \infty} \mathbb{P}\Big( d_1 \in \Big [0, \hat{d}_1 - \dfrac{\hat q_{0, \alpha}}{\sqrt{n}} \Big ) \Big)=1-\alpha
      \end{align*}
      Moreover, the test defined in \eqref{h88} is a consistent and asymptotic level $\alpha$-test, that is

\begin{itemize}
      \item[(1)]  If  $d_1 \geq \epsilon $, ~ then ~
$
          \limsup_{n \rightarrow \infty} \mathbb{P}\big(\hat d_1 < \epsilon + \frac{\hat q_{0,\alpha}}{\sqrt{n}} \big) \leq \alpha 
          $
    \item[(2)]  If  $d_1<  \epsilon $, ~ then   ~      
  $
          \liminf_{n \rightarrow \infty} \mathbb{P}\big(\hat d_1 < \epsilon + \frac{\hat q_{0,\alpha}}{\sqrt{n}} \big)  = 1.
$
      \end{itemize}
      
       }
      \end{Thm}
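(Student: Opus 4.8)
The plan is to deduce all three assertions from two ingredients: the weak limit $\sqrt{n}(\hat d_1-d_1)\xrightarrow{d}T$ supplied by Theorem~\ref{thm1}, and the consistency of the simulated quantile, $\hat q_{0,\alpha}\xrightarrow{P}q_\alpha$, where $q_\alpha$ is the $\alpha$-quantile of $T$ and $F_T$ denotes its distribution function. Granting the quantile consistency, I would first note that $q_\alpha$ is a continuity point of $F_T$ (the limit is a non-degenerate functional of a Gaussian process, so $F_T$ is continuous). The coverage statement then follows by rewriting $\{d_1\in\hat I_n\}=\{\sqrt{n}(\hat d_1-d_1)>\hat q_{0,\alpha}\}$ and applying a Slutsky argument: since $\sqrt{n}(\hat d_1-d_1)-\hat q_{0,\alpha}\xrightarrow{d}T-q_\alpha$ and $0$ is a continuity point of this limit law, $\mathbb P(\sqrt{n}(\hat d_1-d_1)>\hat q_{0,\alpha})\to 1-F_T(q_\alpha)=1-\alpha$.

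For the test I would write the rejection event equivalently as $\{\sqrt{n}(\hat d_1-d_1)<\hat q_{0,\alpha}+\sqrt{n}(\epsilon-d_1)\}$. On the boundary $d_1=\epsilon$ the deterministic drift vanishes and the same Slutsky argument gives probability $F_T(q_\alpha)=\alpha$; if $d_1>\epsilon$ then $\sqrt{n}(\epsilon-d_1)\to-\infty$ while $\sqrt{n}(\hat d_1-d_1)$ is tight and $\hat q_{0,\alpha}$ converges, so the rejection probability tends to $0$. Taking the supremum over $d_1\ge\epsilon$ yields assertion~(1). Conversely, if $d_1<\epsilon$ then $\sqrt{n}(\epsilon-d_1)\to+\infty$, the threshold diverges, and the rejection probability tends to $1$, which is the consistency in~(2).

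It remains to prove the quantile consistency, which is the analytic core. I would establish it by showing that, conditionally on the data and in probability, $\hat T\xrightarrow{d}T$, and then invoke continuity of $F_T$ to pass from conditional distributional convergence to convergence of the $\alpha$-quantiles. The conditional law of $\hat G$ is centred Gaussian with covariance built from $\hat\Sigma_\ell$, $\hat\beta_\ell$ and $n/n_1$; consistency of these (by the appendix assumptions) forces the covariance function---and, together with the smoothness and equicontinuity of the gradient processes $\partial_{b_\ell}m_\ell$, the tightness---to converge to those of $G$, whence $\hat G\Rightarrow G$ conditionally. The delicate point is to replace the random set $\hat{\cal N}$ and the plugged-in sign $\operatorname{sgn}(m_1(\cdot,\hat\beta_1)-m_2(\cdot,\hat\beta_2))$ by their population counterparts inside the two integrals defining $\hat T$ in \eqref{h7}.

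Here the threshold $c\sqrt{\log n/n}$ is designed precisely to dominate the uniform estimation error $\sup_x|\,(m_1(x,\hat\beta_1)-m_2(x,\hat\beta_2))-(m_1(x,\beta_1)-m_2(x,\beta_2))\,|=O_P(n^{-1/2})=o_P(\sqrt{\log n/n})$. This yields, with probability tending to one, the sandwich ${\cal N}\subseteq\hat{\cal N}\subseteq\{x: |m_1(x,\beta_1)-m_2(x,\beta_2)|<2c\sqrt{\log n/n}\}$, so that off a shrinking boundary layer the plugged-in sign equals $\operatorname{sgn}(m_1(x,\beta_1)-m_2(x,\beta_2))$ and $\hat{\cal N}\triangle{\cal N}$ is contained in that layer. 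The main obstacle is therefore to show that this layer $\{0<|m_1(x,\beta_1)-m_2(x,\beta_2)|<2c\sqrt{\log n/n}\}$ has Lebesgue measure tending to $0$; under the level-set regularity imposed in the appendix (transversal zero-crossings of the difference, so that thin level sets have small measure) this holds, and since $\hat G$ is tight the contributions of both integrals over this layer are asymptotically negligible. Combining $\hat G\Rightarrow G$ with the negligibility of the set-estimation error gives the conditional convergence $\hat T\xrightarrow{d}T$, hence $\hat q_{0,\alpha}\xrightarrow{P}q_\alpha$, completing the programme.
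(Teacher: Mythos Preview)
Your overall architecture is the same as the paper's: first establish $\hat q_{0,\alpha}\xrightarrow{\mathbb P}q_\alpha$, then deduce coverage and the two testing statements by Slutsky-type arguments with the drift $\sqrt{n}(\epsilon-d_1)$. The Slutsky part is exactly what the paper does (the paper dresses it up with an Egorov argument to be fully rigorous about replacing $\hat q_{0,\alpha}$ by $q_\alpha$ inside the probability, but the content is the same).

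Two points deserve comment. First, for the quantile consistency the paper does \emph{not} go through conditional weak convergence of $\hat G$ via covariance-plus-tightness as you propose. It exploits the fact that $\hat G$ and $G$ are built from the \emph{same} standard normals $Z_1,Z_2$ and differ only through the plugged-in parameters; hence $\|\hat G-G\|_\infty\to 0$ in probability by a direct Slutsky step, and then one shows $|\hat T-T|\to 0$ in probability unconditionally, from which the conditional statement and quantile convergence follow. Your route is valid but more laborious; the coupling avoids any tightness verification.

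Second, and more importantly, you appeal to ``level-set regularity imposed in the appendix (transversal zero-crossings)'' to make the boundary layer $\{0<|\theta|<2c\sqrt{\log n/n}\}$ have vanishing Lebesgue measure. No such assumption is imposed anywhere in the paper. The vanishing is immediate from dominated convergence: for each fixed $x$, $\mathbbm{1}\{0<|\theta(x)|<\delta_n\}\to 0$ as $\delta_n\downarrow 0$, and since $\mathcal X$ is compact (Assumption~2) the indicator is dominated by $1\in L^1(\mathcal X)$, so $\lambda(\{0<|\theta|<\delta_n\})\to 0$. The paper uses exactly this (implicitly, when it says a ``similar inequality'' handles $\lambda(\mathcal N^c\cap\hat{\mathcal N})$). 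So your argument is correct once you replace the non-existent transversality hypothesis by this elementary DCT step.
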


        \begin{rem} 
        {\rm 
      A two-sided (asymptotic) confidence   interval 
      for   $d_1$ is given by 
 $    
         [ \hat d_1 - \frac{\hat q_{0, 1-\alpha/2}}{\sqrt{n}}, \ \hat d_1 - \frac{\hat q_{0, \alpha/2}}{\sqrt{n}}].
 $    }
      \end{rem}

        	\section{Bootstrap methodology}
        	\label{sec3}
        	  \def\theequation{3.\arabic{equation}}
\setcounter{equation}{0}

         The confidence interval  and test proposed in Section \ref{sec2} require the precise estimation of the set ${\cal N} $  in \eqref{h1}, which might be difficult for small sample sizes. In order to address this  problem and to obtain also a better approximation of the nominal level we propose a parametric bootstrap approach. While this is quite standard for the construction of confidence intervals         
         we will develop and investigate a novel constrained bootstrap approach for testing the hypotheses  \eqref{l3}.
         This approach  will result in more powerful tests compared to the confidence interval approach (see Section \ref{sec4} below).
     We describe  the construction of a bootstrap confidence interval
        in Algorithm \ref{alg1}.
\begin{algorithm}
\caption{Bootstrap confidence interval}
\label{alg1}
\begin{itemize}
    \item[(1)] Calculate  the estimates $\hat \beta_1$ and $\hat \beta_2$ (for each group).
    \item[(2)] For  $\ell = 1, 2$, $i=1, \ldots , k_\ell $, $j=1, \ldots , n_{\ell,i}$ generate bootstrap data from the model
       \begin{eqnarray}
        \label{h5c}
        Y_{\ell,i,j}^{*} = m_{\ell}(x_{\ell,i},{\hat{\beta_{\ell}}}) + \eta_{\ell,i,j}^{*}, 
       \end{eqnarray} 
    where the errors $\eta_{\ell,i,j}^{*}$ are independent centered normal  distributed with variance $\hat\sigma_{\ell}^2$. 
    \item[(3)] Let $\hat\beta_{1}^{*}$ and  $\hat\beta_{2}^{*}$ denote the estimators of $\beta_1$ and $\beta_2$ from the bootstrap data in \eqref{h5c}. Calculate the bootstrap test statistic $\hat d_{1}^{*} := d_{1}(\hat\beta_{1}^{*},\hat\beta_{2}^{*})$ and define $\hat{q}_{1-\alpha,0}^*$ as the $(1-\alpha)$-quantile of the distribution of $\hat d_{1}^{*}$.
    \item[(4)] The bootstrap confidence interval is defined by 
       \begin{eqnarray}
       \label{h7c}
        \hat I_n^* := \big   [0,    \hat{q}_{1-\alpha,0 }^{*} \big ). 
       \end{eqnarray}
\end{itemize}
\end{algorithm}
            
As in Section \ref{sec2} the duality between confidence intervals and statistical tests \citep[see][]{aitchison1964} yields a test for the hypotheses \eqref{l3}, which rejects the null hypothesis, whenever
$ \hat I_n^* \subset [0, \epsilon )$, that is 
\begin{eqnarray} \label{hd21}
     \hat{q}_{1-\alpha,0 }^{*} < \epsilon
\end{eqnarray}
The following result shows that this adhoc approach yields a consistent asymptotic level $\alpha $ test.

\begin{Thm}  \label{thm23} 

      {\it Let the assumptions of Theorem \ref{thm1} be  satisfied
      and assume that $\lambda(\mathcal{N})=0$.  The  interval  $ \hat{I}^*_n$ in \eqref{h7c} defines an asymptotic confidence interval for the quantity $d_1$, i.e.
      \begin{align*}
          \lim_{n \rightarrow \infty} \mathbb{P}\big( d_1 \in \hat{I}^*_n \big) 
          =1-\alpha.
      \end{align*}
     Moreover, the test defined in \eqref{hd21} is a consistent and asymptotic level $\alpha$-test, that is   

\begin{itemize}
      \item[(1)]  If  $d_1 \geq \epsilon $ then  ~
$
          \limsup_{n \rightarrow \infty} \mathbb{P}\big( \hat{q}_{1-\alpha,0 }^{*} < \epsilon\big) \leq \alpha 
          $
    \item[(2)]  If  $d_1<  \epsilon $ then       ~  $
          \liminf_{n \rightarrow \infty} \mathbb{P}\big(\hat{q}_{1-\alpha,0 }^{*} < \epsilon\big)  = 1.
$
      \end{itemize}
      }
\end{Thm}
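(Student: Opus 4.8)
The plan is to exploit the extra hypothesis $\lambda(\mathcal{N}) = 0$, which has two consequences. First, the second integral in the representation \eqref{h4} of $T$ is taken over a Lebesgue-null set and therefore vanishes, so that
\[
T = \int_{\mathcal{X}} \operatorname{sgn}\big(m_1(x,\beta_1) - m_2(x,\beta_2)\big) \, G(x)\, dx
\]
is a continuous linear functional of the centred Gaussian process $G$, whence $T$ is centred normal, say $T \sim N(0,\tau^2)$; I would check that $\tau^2 > 0$ under Assumptions 1--7, so that the limit law is continuous, symmetric and its quantiles are well defined. Second, and equivalently, the map $\phi\colon f \mapsto \int_{\mathcal{X}} |f(x)|\, dx$ is then \emph{fully} (and not merely directionally) Hadamard differentiable at $g = m_1(\cdot,\beta_1) - m_2(\cdot,\beta_2)$, with the \emph{linear} derivative $h \mapsto \int_{\mathcal{X}} \operatorname{sgn}(g)\, h\, dx$. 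This linearity is the structural fact on which the whole argument rests.

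Next I would establish consistency of the parametric bootstrap. Writing $\hat g = m_1(\cdot,\hat\beta_1) - m_2(\cdot,\hat\beta_2)$ and $\hat g^* = m_1(\cdot,\hat\beta_1^*) - m_2(\cdot,\hat\beta_2^*)$, the first step is to show that, conditionally on the data and in probability, the bootstrap process $\sqrt{n}(\hat g^* - \hat g)$ converges weakly in $\ell^\infty(\mathcal{X})$ to the same Gaussian process $G$ arising in Theorem \ref{thm1}; this follows from the conditional asymptotic normality of $\sqrt{n}(\hat\beta_\ell^* - \hat\beta_\ell)$ for the scheme \eqref{h5c}, combined with the smoothness of the $m_\ell$ (a linearisation identical to the one used for $\sqrt{n}(\hat g - g)$ in the proof of Theorem \ref{thm1}). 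Because the derivative $\phi'_g$ is linear, I can then invoke the functional delta method for the bootstrap (van der Vaart and Wellner, Theorem 3.9.11) to conclude that, conditionally on the data and in probability,
\[
\sqrt{n}\big(\hat d_1^* - \hat d_1\big) = \sqrt{n}\big(\phi(\hat g^*) - \phi(\hat g)\big) \xrightarrow{d} \phi'_g(G) = T \sim N(0,\tau^2).
\]
Here $\lambda(\mathcal{N}) = 0$ is indispensable: for a genuinely directional (nonlinear) derivative the naive bootstrap is known to be inconsistent, and the measure-zero condition is exactly what removes the nonlinear term $\int_{\mathcal{N}} |G|$ and restores validity.

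From this conditional convergence the bootstrap quantiles are controlled. Denoting by $q^*_{1-\alpha}$ the conditional $(1-\alpha)$-quantile of $\sqrt{n}(\hat d_1^* - \hat d_1)$, the quantile of $\hat d_1^*$ itself satisfies $\hat q^*_{1-\alpha,0} = \hat d_1 + q^*_{1-\alpha}/\sqrt{n}$ (a location shift by the data-constant $\hat d_1$), and since the limit $N(0,\tau^2)$ has a continuous, strictly increasing distribution function, one obtains $q^*_{1-\alpha} \xrightarrow{p} \tau z_{1-\alpha}$ with $z_{1-\alpha} = \Phi^{-1}(1-\alpha)$. The three assertions then follow by Slutsky's lemma together with Theorem \ref{thm1}. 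For coverage I would write $\{d_1 \in \hat I_n^*\} = \{-\sqrt{n}(\hat d_1 - d_1) < q^*_{1-\alpha}\}$, so that the probability converges to $\mathbb{P}(-T < \tau z_{1-\alpha}) = 1-\alpha$ by symmetry of $T$. For the level, I would rewrite the rejection event as $\{\sqrt{n}(\hat d_1 - \epsilon) < -q^*_{1-\alpha}\}$: when $d_1 > \epsilon$ the left-hand side tends to $+\infty$ and the probability vanishes, while at the boundary $d_1 = \epsilon$ it converges to $\Phi(-z_{1-\alpha}) = \alpha$, giving $\limsup_n \le \alpha$. For consistency, when $d_1 < \epsilon$ the deterministic term $\sqrt{n}(d_1 - \epsilon) \to -\infty$ dominates the tight quantities $\sqrt{n}(\hat d_1 - d_1)$ and $q^*_{1-\alpha}$, so the rejection probability tends to $1$.

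The main obstacle is the bootstrap consistency step: one must verify both the conditional weak convergence of the parametric bootstrap parameter estimates and, more delicately, that the linearity of $\phi'_g$ supplied by $\lambda(\mathcal{N}) = 0$ legitimately upgrades the merely directional differentiability of the $L^1$-functional to the full Hadamard differentiability demanded by the bootstrap delta method. All remaining steps are routine Slutsky-type manipulations.
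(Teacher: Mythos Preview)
Your proposal is correct and follows essentially the same route as the paper: both use the assumption $\lambda(\mathcal{N})=0$ to make the Hadamard derivative of the $L^1$-functional linear, then invoke the bootstrap delta method (the paper cites Theorem~23.9 in \cite{van2000asymptotic}, you cite the equivalent van der Vaart--Wellner result) to obtain the conditional weak limit $\sqrt{n}(\hat d_1^*-\hat d_1)\xrightarrow{d} T$, convert this to convergence of the bootstrap quantile, and finish with Slutsky-type manipulations exploiting the symmetry of $T$. Your presentation is somewhat more explicit in that you identify $T$ as centred normal (the paper only invokes its symmetry) and you flag the need to check $\tau^2>0$, which the paper leaves implicit.
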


The finite sample properties of the confidence interval \eqref{h7c} and the test \eqref{hd21} will be investigated in Section \ref{sec4}. In particular it will be demonstrated that, by its construction,  the  test \eqref{hd21}  is rather conservative and not very powerful.
Therefore we propose as an alternative a constrained 
bootstrap test for the hypotheses \eqref{l3}, which addresses the specific structure of the composite hypotheses \eqref{l3}. The pseudo code for this test is summarized in Algorithm \ref{alg2}.

\begin{algorithm}[t]
\caption{(Constrained parametric bootstrap test}
\label{alg2}
\begin{itemize}
       \item[(1)] Calculate the test statistic $\hat d_{1} := d_{1}(\hat{\beta_{1}},\hat{\beta_{2}})$ defined in \eqref{l4}.
       
       \item[(2)] Calculate the  estimators $\tilde{\beta_{1}}$  and $ \tilde{\beta_{2}}$  of the parameters $\beta_{1}$ and $\beta_{2}$, respectively,  under the additional constraint that $ d_{1}(\beta_{1},\beta_{2}) = \epsilon$. Define 
       	\begin{eqnarray}
       	\label{h8}
       	\hat{\hat{\beta_{\ell}}} = 
             \begin{cases}
       			\hat{\beta_{\ell}}, \ \hat d_{1} \geq \epsilon \\
       			\tilde{\beta_{\ell}}, \ \hat d_{1} < \epsilon.
             \end{cases}, \ \ell = 1, 2.
             \end{eqnarray}
       \item[(3)] 
       		For  $\ell = 1, 2$, $i=1, \ldots , k_\ell $, $j=1, \ldots , n_{\ell,i}$ generate bootstrap data from the model
       		\begin{eqnarray}
       		    \label{h5}
       	  			Y_{\ell,i,j}^{*} = m_{\ell}(x_{\ell,i},\hat{\hat{\beta_{\ell}}}) + \eta_{\ell,i,j}^{*}, 
       				\end{eqnarray} 
       			where 
       			the errors $\eta_{\ell,i,j}^{*}$ are independent centered  normal  distributed  with variance 
       			$
       			\hat\sigma_{\ell}^2$.
  \item[(4)] Let $\hat\beta_{1}^{*}$ and  $\hat\beta_{2}^{*}$ denote the  estimators of $\beta_1$ and $\beta_2$ from the bootstrap data
  in \eqref{h5}.
       			    Calculate the bootstrap test statistic $\hat d_{1}^{*} := d_{1}(\hat\beta_{1}^{*},\hat\beta_{2}^{*}) $
       			    and define $\hat{q}_{\alpha,1}^*$
       			    as the $\alpha$-quantile of the distribution of $\hat d_{1}^{*}$.
       			\item[(5)] The null hypothesis in \eqref{l3} is rejected, whenever
       	\begin{eqnarray}
       	\label{h7}
        \hat d_{1} < \hat{q}_{\alpha,1}^{*}. 
       	      	\end{eqnarray}
       		
       		\end{itemize}
   \end{algorithm}

   \begin{rem}  ~~\\
   {\rm 
   (a)  		Note that,  by   definition  \eqref{h8},
   $\hat{\hat d} _{1} := d_{1}(\hat{\hat{\beta_{1}}}\hat{\hat{\beta_{2}}}) \geq \epsilon$. Therefore, Algorithm \ref{alg2}
  generates in step (3)  bootstrap data under the null hypothesis. 
   \smallskip
   
   (b) In practice the quantile can be estimated with arbitrary precision  generating bootstrap replicates $\hat{d}_{1}^{*(1)},  \ldots  , \hat{d}_{1}^{*(B)}$  as described in step (3) and (4) and 
   calculating the empirical $\alpha$-quantile, say $\hat{q}_{\alpha,1}^{(B)}$, from this sample. 

   (c)         The following result shows that this parametric bootstrap test has asymptotic level $\alpha$ and is consistent if   the set ${\cal N}$
        defined in \eqref{h1} has Lebesgue measure $0$.
   }
\end{rem}

       	\begin{Thm}
       	\label{thm2}
       		\textit{Let Assumption 1-7 in the appendix 
       		be satisfied and  assume that
       		the set ${\cal N}$
        defined in \eqref{h1} has Lebesgue measure zero. 
       Further assume that the $\alpha$-quantile  $q_\alpha$ of the random variable $T$ in \eqref{h4} is negative.
    The constrained bootstrap test defined by \eqref{h5}  in Algortithm \ref{alg2} has asymptotic level $\alpha$ and is consistent. More precisely,
         }
       		\begin{itemize}
       			\item[a)] \textit{If  the null hypothesis $H_0: d_{1} \geq \epsilon$ holds, then $\lim_{n_{1}, n_{2} \to \infty} \mathbb{P}(\hat d_{1} < \hat{q}_{\alpha,1}^{*}) = 
       			0$ if $ \ d_{1} > \epsilon $
          and $\lim_{n_{1}, n_{2} \to \infty} \mathbb{P}(\hat d_{1} < \hat{q}_{\alpha,1}^{*}) = \alpha,$
          if $d_{1} = \epsilon$.
         }
       			\item[b)] \textit{If the alternative hypothesis $H_1: d_{1} < \epsilon$ holds, then  ~
       			$ 
       			\lim_{n_{1}, n_{2} \to \infty} \mathbb{P}(\hat d_{1} < \hat{q}_{\alpha,1}^{*}) = 1. 
       			$}
       		\end{itemize}
       		
       	\end{Thm}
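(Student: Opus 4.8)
The plan is to reduce everything to two ingredients: the weak limit from Theorem~\ref{thm1} and a bootstrap consistency statement that is made possible precisely by the assumption $\lambda(\mathcal{N})=0$. Writing $\phi(f)=\int_{\mathcal{X}}|f(x)|\,dx$ and $f_0=m_1(\cdot,\beta_1)-m_2(\cdot,\beta_2)$, a direct computation gives the directional Hadamard derivative $\phi'_{f_0}(h)=\int_{\mathcal{N}^c}\operatorname{sgn}(f_0)\,h\,dx+\int_{\mathcal{N}}|h|\,dx$. First I would observe that when $\lambda(\mathcal{N})=0$ the second term vanishes, so $\phi'_{f_0}$ is \emph{linear}; hence $\phi$ is fully Hadamard differentiable at $f_0$, the limit $T$ in \eqref{h4} is a centred Gaussian, and the standard delta method for the bootstrap applies. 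From this I would deduce that, whenever the resampling parameters converge to the truth, the conditional law of $\sqrt{n}(\hat d_1^{*}-\hat{\hat d}_1)$ converges weakly in probability to $T$, so the bootstrap quantile obeys the expansion $\hat q_{\alpha,1}^{*}=\hat{\hat d}_1+q_\alpha/\sqrt{n}+o_{\mathbb{P}}(n^{-1/2})$ with $q_\alpha<0$ the $\alpha$-quantile of $T$.

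Next I would record the elementary identity $\hat{\hat d}_1=d_1(\hat{\hat\beta}_1,\hat{\hat\beta}_2)=\max(\hat d_1,\epsilon)$, immediate from \eqref{h8}. Plugging the quantile expansion into the rejection rule \eqref{h7} rewrites $\{\hat d_1<\hat q_{\alpha,1}^{*}\}$ as $\{\hat d_1<\max(\hat d_1,\epsilon)+q_\alpha/\sqrt{n}+o_{\mathbb{P}}(n^{-1/2})\}$, which I would analyse by splitting on the sign of $\hat d_1-\epsilon$.

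The three regimes then follow. If $d_1>\epsilon$, consistency of $\hat d_1$ forces $\hat{\hat d}_1=\hat d_1$ with probability tending to one, so the event collapses to $\{0<q_\alpha+o_{\mathbb{P}}(1)\}$ after scaling by $\sqrt{n}$, which is asymptotically empty since $q_\alpha<0$; hence the rejection probability tends to $0$. If $d_1<\epsilon$, then $\hat{\hat d}_1=\epsilon$ eventually and $\hat q_{\alpha,1}^{*}\to\epsilon$ in probability while $\hat d_1\to d_1<\epsilon$, so the rejection probability tends to $1$. In the boundary case $d_1=\epsilon$ I would split further: on $\{\hat d_1\geq\epsilon\}$ the event again reduces to $\{0<q_\alpha+o_{\mathbb{P}}(1)\}$ and contributes nothing, whereas on $\{\hat d_1<\epsilon\}$ it becomes $\{\sqrt{n}(\hat d_1-\epsilon)<q_\alpha+o_{\mathbb{P}}(1)\}$; because $q_\alpha<0$ this event is asymptotically contained in $\{\hat d_1<\epsilon\}$, so the total rejection probability equals $\mathbb{P}(\sqrt{n}(\hat d_1-\epsilon)<q_\alpha+o_{\mathbb{P}}(1))$, which by Theorem~\ref{thm1} and the continuity of the Gaussian law of $T$ converges to $\mathbb{P}(T<q_\alpha)=\alpha$.

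The hard part will be the rigorous justification of the bootstrap consistency in the first step. For a merely directionally differentiable functional the naive bootstrap is generally inconsistent, so the argument stands or falls on showing that $\lambda(\mathcal{N})=0$ genuinely upgrades $\phi'_{f_0}$ to a linear derivative along the resampling sequence, and that this upgrade is stable when $\beta_\ell$, $\Sigma_\ell$ and the contact set $\mathcal{N}$ are replaced by their constrained estimates $\hat{\hat\beta}_\ell$, $\hat\Sigma_\ell$ and $\hat{\hat{\mathcal{N}}}$. In particular I would need $\hat{\hat\beta}_\ell\to\beta_\ell$ in both sub-cases of $d_1=\epsilon$ (which holds because the truth already satisfies the constraint $d_1=\epsilon$, so the constrained estimator $\tilde\beta_\ell$ is consistent) together with $\lambda(\hat{\hat{\mathcal{N}}})\to0$, so that the delta method transfers to the bootstrap world and the quantile expansion is uniform across the two sub-cases. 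Once this is secured, the quantile expansion and the case analysis are routine.
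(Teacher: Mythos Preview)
Your proposal is correct and follows essentially the same route as the paper: both use $\lambda(\mathcal{N})=0$ to make $\Phi'_\theta$ linear so that the standard bootstrap delta method (Theorem~23.9 in \cite{van2000asymptotic}) yields $\sqrt{n}(\hat q^{*}_{\alpha,1}-\hat{\hat d}_1)\to q_\alpha$ in probability, and then split on $d_1\gtrless\epsilon$ and, at the boundary, further on $\hat d_1\gtrless\epsilon$, exploiting $q_\alpha<0$ to kill the sub-case $\hat{\hat d}_1=\hat d_1$. One simplification to your last paragraph: you need not worry about $\lambda(\hat{\hat{\mathcal{N}}})$ at all---once $\Phi$ is genuinely Hadamard differentiable at the true $\theta$ and $\hat{\hat\beta}_\ell\to\beta_\ell$ (which, as you note, holds at the boundary because the truth already satisfies the constraint), the bootstrap delta method applies directly to $\Phi$ without any contact-set estimation.
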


       	\begin{rem}
       	\label{r1} ~~\\
        {\rm 
(a)         Let
  $
              \theta  =   \{ m_{1}(x,\beta_{1}) - m_{2}(x,\beta_{2}) \}_{x \in {\cal X}} $, 
            $  \hat \theta = \{m_{1}(x,\hat{\hat{\beta_{1}}})-m_{2}(x,\hat{\hat{\beta_{2}}})\}_{x \in \mathcal{X}}$
              and $
              \hat \theta^*  =   \{ m_{1}(x,\hat\beta_{1}^{*}) - m_{2}(x,\hat\beta_{2}^{*}) \}_{x \in {\cal X}} 
       $   
            denote processes on $\ell ^{\infty} ({\cal X}) $
            and define the mapping $
        		\Phi(f) = \int_{\mathcal{X}} \abs{f(x)} \ts dx $ 
     from $\ell ^{\infty} ({\cal X}) $ onto $\mathbb{R} $. By the proof of Theorem
     \ref{thm1} we have
     $$
     \sup_{h \in BL}  \big | \mathbb{E} \big [ h  \big ( \sqrt{n} (  \Phi(\hat \theta  )- \Phi( \theta ))  \big ) \big ] -
     \mathbb{E} \big [  h \big (  \Phi^{'}_{ \theta }( \mathbb{G} ) \big ) ] \big |  = o(1) ~,
     $$
     where $BL$ denotes the space of bounded Lipschitz functions
     \citep[see][]{van2000asymptotic}, 
     $\mathbb{G}=\{G(x)\}_{x \in \mathcal{X}}$ is the Gaussian process defined in Theorem \ref{thm1}
     and  $\Phi^{'}_{ \theta }$ denotes the directional Hadamard derivative  at the process $\theta \in \ell ^{\infty} ({\cal X}) $. Now, according to Theorem 3.1  in \cite{fang2019inference},  the corresponding statement for the bootstrap process
    $$
     \sup_{h \in BL}  \big | \mathbb{E}^* \big [ h  \big ( \sqrt{n} (  \Phi(\hat \theta^*  )- \Phi( \hat \theta ))   \big ) ] -
     \mathbb{E} \big [  h \big (  \Phi^{'}_{ \theta }( \mathbb{G} ) \big ) ] \big |  = o_\mathbb{P} (1) 
     $$  
     (here  $\mathbb{E}^*$ denote the expectation conditional on the sample) holds if and only if the directional derivative of the mapping $\Phi$  at $\theta$ is linear, that is   $\Phi$ is Hadamard differentiable  at $\theta$. However, it follows from the proof of Theorem \ref{thm1} that this is only the case if 
            $\lambda(\mathcal{N}) = 0$.
            Thus in general,  it is not clear if the bootstrap is consistent in the case        $\lambda(\mathcal{N}) > 0$.
           \\
            However, from a practical point of view  the condition $\lambda(\mathcal{N}) = 0$ will be fulfilled in most applications. For example, if the predictor is one-dimensional the  curves  corresponding to typically used parametric regression models $m_1$ and $ m_2$ either intersect in  at most one point or they are completely identical (which is unlikely in most applications).
        Therefore, Theorem \ref{thm2} 
        is applicable in most applications and ensures that the parametric bootstrap defined by Algorithm \ref{alg2} yields a statistically valid procedure.  

            \smallskip

(b) A  bootstrap procedure, for which consistency can be proved  even in the case 
$\lambda ({\cal N)} >0$  can be obtained using the results of  \cite{fang2019inference},
in particular we will construct an estimator of the directional derivative of the $L_1$-norm mapping that fulfills the assumptions of Theorem 3.2 in their paper. 
 To be precise, we define
$$ \hat{\Phi}^{'}(f) := \int_{ | {\hat{\theta}}|  \geq {1}/{s_{n}} } \operatorname{sgn}(\hat{\theta}(x)) \ts f(x) \ dx \ + \int_{ |{\hat{\theta}|} < {1}/{s_{n}} } \abs{f(x)} \ dx $$
for some sequence $s_n$ satisfying $s_{n} / \sqrt{n} \to 0$. We then proceed as in Algorithm \ref{alg2}, where the steps (4) and (5) are replaced by   \\
(4') Calculate the bootstrap test statistic $ \hat{\Phi}^{'}(  \hat{\theta}^{*}-\hat{\theta} ) $
                   			    and  the 
                           $\alpha$-quantile $\hat{q}_{\alpha,1}^*$ of 
                          its distribution.
       	 \\
(5')	    	    The null hypothesis in \eqref{l3} is rejected, whenever
        $  \hat d_{1} < \hat{q}_{\alpha,1}^{*} + \epsilon. $
               It can  then be shown 
               (see the Appendix) that the  statements of Theorem \ref{thm2}  hold for this test, even in the case  $\lambda ({\cal N}) >0 .$
}     
       	\end{rem}

        	\section{Finite sample properties}
        	\label{sec4}
  \def\theequation{4.\arabic{equation}}
  \setcounter{equation}{0}

We investigate the finite sample properties of the confidence intervals and the tests for the hypotheses \eqref{l3} by means of a small simulation study. For this purpose we consider two  E-max models 
   \begin{align}
       \label{hd22}
             m_{\ell }(x,\beta_{\ell}) &= \beta_{\ell 1} + \dfrac{\beta_{\ell 2}x}{\beta_{\ell 3}+x}
             ~~~(\ell=1,2), 
          \end{align}
          where 
          $x \in \mathcal{X} = [0,4]$.  We consider  two scenarios for the parameters:
          \begin{eqnarray}
          \label{inter}
          {\rm Intersecting~curves:} & \beta_{1} = (5,3,1)^\top~,~~ \beta_{2} = (5, 3+\gamma, 1+\gamma)^{\top},& \gamma \geq 0 \\
            {\rm Parallel  ~curves:} & \beta_{1} = (\delta,5,1)^{\top}  ~,~~    \beta_{2} = (0,5,1)^{\top},  & \delta \geq 0 .
            \label{par}
               \end{eqnarray}
  Some typical curves are displayed in Figure \ref{figplot}.

\begin{figure}[H]
    \centering
    \includegraphics[width=5cm, height=4 cm]{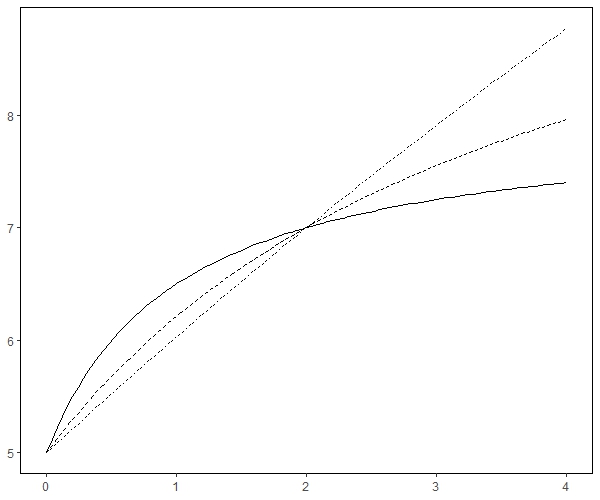}
    ~~~~   ~~~~
    \includegraphics[width=5cm, height=4cm]{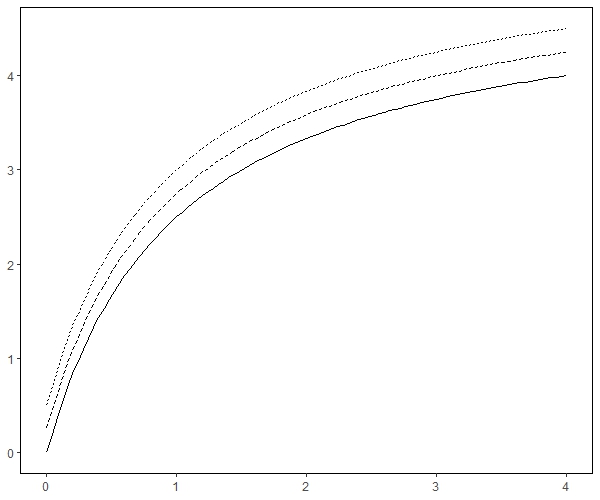}
    \caption{\it Typical E-max curves considered in the simulation study.
      Left panel: intersecting curves defined by \eqref{inter} with 
      $\gamma = 0$ (solid), $\gamma = 2.7 $  (dashed) and $\gamma = 30 $ (dotted). Right panel: parallel curves defined by \eqref{par} with 
      $\delta =0 $ (solid), $\delta = 1/4$ (dashed) and   $\delta =1 /2 $ (dotted).  }\label{figplot}
\end{figure}

          An equal number of observations is allocated to  five equidistant dose levels $x_{1,1} = x_{2,1} = 0, \ts x_{1,2} = x_{2,2} = 1, \ts  \ldots  , \ts 
          x_{1,5} = x_{2,5} = 4$, and   the sample sizes  for both groups are given by 
          $n_1=n_2 = 20, 50, 100 $ and $200$. 
The errors in the regression models 
  \eqref{l1} are centered normal distributions 	 
            with variances chosen as  $(\sigma_{1}^2,\sigma_{2}^2)=(0.25,0.25)$ and $(0.25,0.5)$.   All bootstrap results  are obtained by  $B=300$ replications.

    As estimators for the parameters in the regression models \eqref{l1}  we use least squares estimators, that is
\begin{eqnarray*}
       			\hat{\beta_{\ell}} &=&
       			{\rm arg}\min_{\beta_\ell  \in \mathbb{R}^{p_\ell} }
       			\dfrac{1}{n_{\ell}} \sum_{i=1}^{k_{\ell}}\sum_{j=1}^{n_{\ell,i}} (Y_{\ell,i,j}-m_{\ell}(x_{\ell,i}, {\beta_{\ell}}) )^2~ , \\
       			\hat\sigma_{\ell}^2 &= &\dfrac{1}{n_{\ell}} \sum_{i=1}^{k_{\ell}}\sum_{j=1}^{n_{\ell,i}} (Y_{\ell,i,j}-m_{\ell}(x_{\ell,i}, \hat{\beta_{\ell}}) )^2~
       			 \end{eqnarray*} 
$(\ell=1,2)$. We start investigating the coverage probabilities of the asymptotic and bootstrap confidence intervals for the distance $d_1$ defined in \eqref{conf} and \eqref{h7c}, respectively. For the asymptotic confidence interval we estimate the set 
${\cal N}$ by \eqref{h55} with $c=1$. 
The parameters in the two  E-max models \eqref{hd22}  are defined by 
\eqref{inter}  and \eqref{par} such that $d_1=1$.
The corresponding results are given in Table \ref{tab1}, where the upper part corresponds to the intersecting  and the lower part to the parallel scenario. We observe that the coverage probabilities of the asymptotic confidence interval are too small, but they improve with increasing sample size.  
The results for the bootstrap confidence intervals are more satisfactory.
For small sample sizes the bootstrap yields  intervals with a too large  coverage probability, but in general it provides an improvement.

\begin{table}[htbp]
    \centering
    \begin{tabular}{c| c|c||c|c|}
      & \multicolumn{4}{c}{$(\sigma_1^2, \sigma_2^2)$} \\
     \cline{2-5}
       $(n_{1}, n_{2})$  &  (0.25, 0.25) & (0.25, 0.5) & (0.25, 0.25) & (0.25, 0.5) \\
       \hline
            (20,20)      &  0.640        &  0.680     &  1.000   &  1.000             \\
            (50,50)      &  0.690        &  0.695     &  0.990   &  1.000             \\
            (100,100)    &  0.830        &  0.810     &  0.965   &  0.990             \\
            (200,200)    &  0.845        &  0.845     &  0.955   &  0.960             \\
    \hline
    \hline
            (20,20)      &  0.620        &  0.645      &  1.000       &  1.000             \\
            (50,50)      &  0.695        &  0.760      &  0.994       &  1.000             \\
            (100,100)    &  0.775        &  0.755      &  0.972       &  0.984             \\
            (200,200)    &  0.855        &  0.840      &  0.938       &  0.960             \\
    \end{tabular}
    \caption{\it  
    Coverage probabilities of the asymptotic  (left)  and bootstrap $95\%$-confidence interval (right). The regression functions are given by \eqref{hd22} such
    that  $d_1 =1$. Upper part: intersecting curves defined by \eqref{inter}. Lower part: parallel curves defined by \eqref{par}. 
   }
    \label{tab1}
\end{table}


 Next, we consider the problem of testing the hypotheses \eqref{l3} with  $\epsilon = 1$.
 We begin with the two tests 
 \eqref{h88} and \eqref{hd21}, which have been derived from the asymptotic and bootstrap confidence interval, respectively.
 The corresponding rejection probabilities are displayed in Figure \ref{fig1} for the E-max models \eqref{hd22} 
with  parameters  \eqref{par} (parallel curves). 
Note that in this case 
  $d_1= 4 \delta$ and 
  $    \lambda(\mathcal{N}) =0$,  whenever $\delta \not=0$.
Therefore, the cases $\delta \geq 0.25$  and $\delta <  0.25$ correspond to the null hypothesis and alternative in \eqref{l3}, respectively.
The horizontal solid line marks the significance level $\alpha = 0.05$ and the vertical solid  line 
corresponds to the boundary of the null hypothesis, that is $d_1 = 1 $.  The curves reflect the qualitative behaviour predicted by Theorem \ref{thm22} and Theorem \ref{thm23}. 
Note that the power curves are decreasing in $d_1$ as the null hypothesis  is given by  $H_0: d_1 \geq 1$. We observe that the asymptotic test  \textbf{\eqref{h88} }
does not keep its nominal level. In particular for small sample sizes the level is substantially exceeded. On the other hand, the bootstrap test 
\eqref{hd21}  keeps the nominal level in all  situations under consideration. The asymptotic test has more power, but this advantage 
comes at the cost of an unreliable approximation of the nominal level.
Therefore, if one would have to choose from these tests, we would recommend to use  the test based on the bootstrap confidence interval. Note that this test has not much power for the sample sizes $n_1=n_2=20$ and $50$, but the constrained bootstrap test  developed in Section \ref{sec3} will yield a further improvement.

\begin{figure}[H]
    \centering
    \includegraphics[width=5cm, height=4cm]{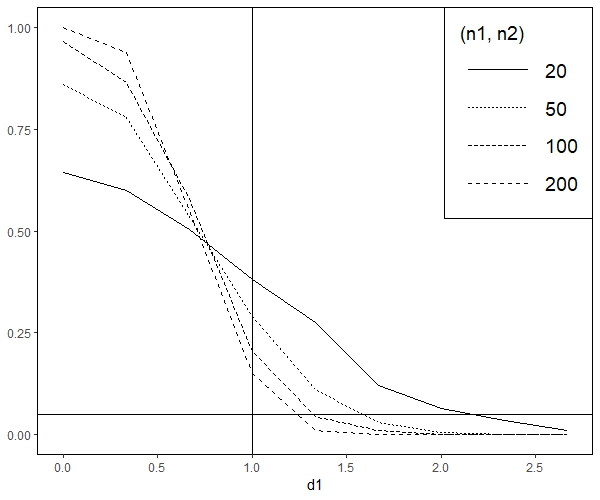}
   ~~~~  ~~~~
    \includegraphics[width=5cm, height=4cm]{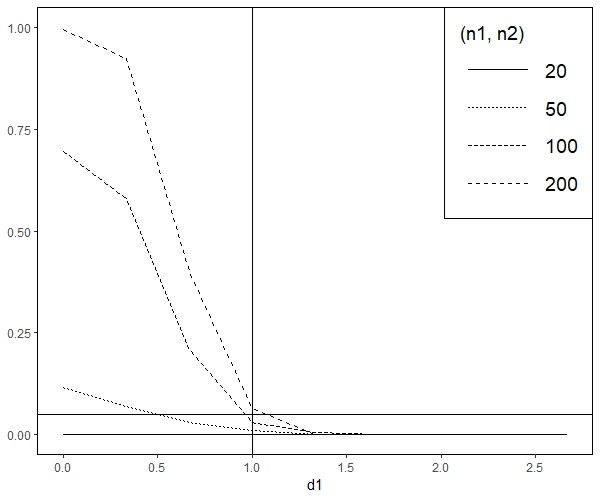}
    \includegraphics[width=5cm, height=4cm]{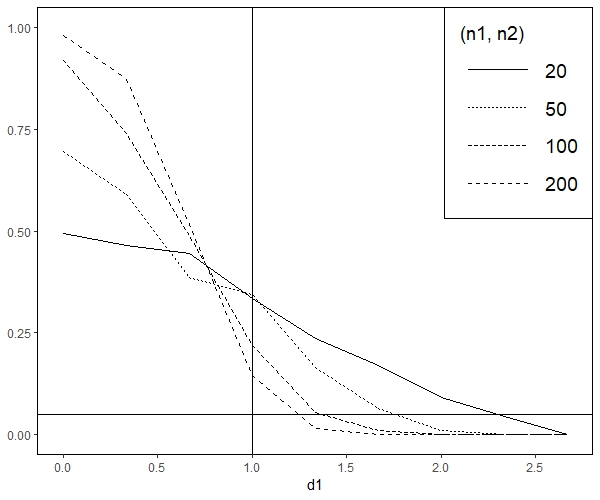}
   ~~~~   ~~~~
    \includegraphics[width=5cm, height=4cm]{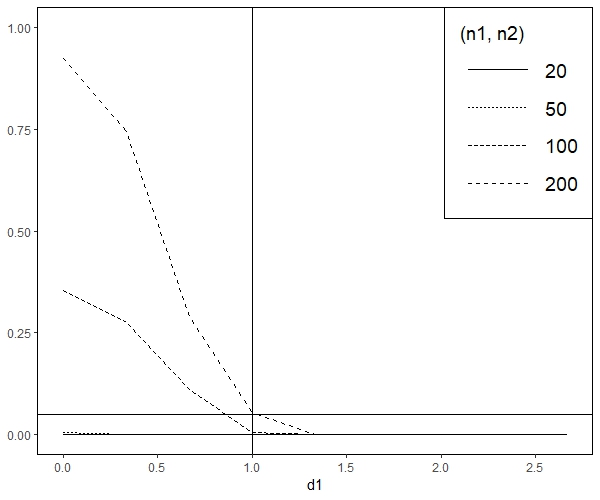}
    \caption{\it Rejection probabilities 
        of the tests \eqref{h88}   (left)  and 
              \eqref{hd21}  (right)  for the hypotheses \eqref{l3}.
            First row: $\sigma_1^2 =  \sigma_2^2 = 0.25$; Second row:   $\sigma_1^2 = 0.25,$ $ \sigma_2^2 = 0.5$.
            The nominal level is $\alpha =0.05$  and parallel E-Max models defined by \eqref{par} are considered.}
             \label{fig1}
\end{figure}

\begin{figure}[H]
    \centering
    \includegraphics[width=5cm, height=4cm]{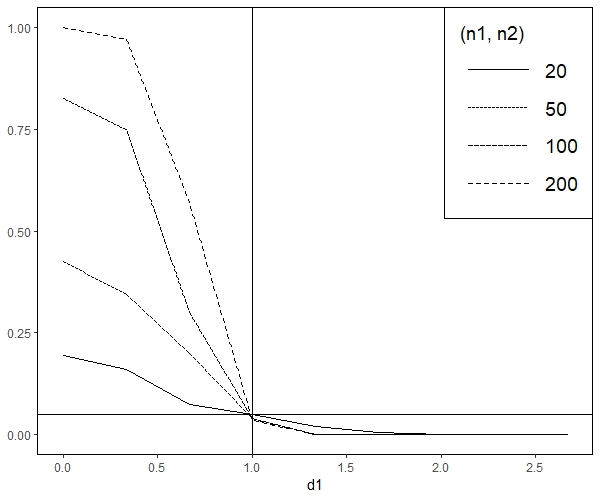}
~~~~ ~~~~
\includegraphics[width=5cm, height=4cm]{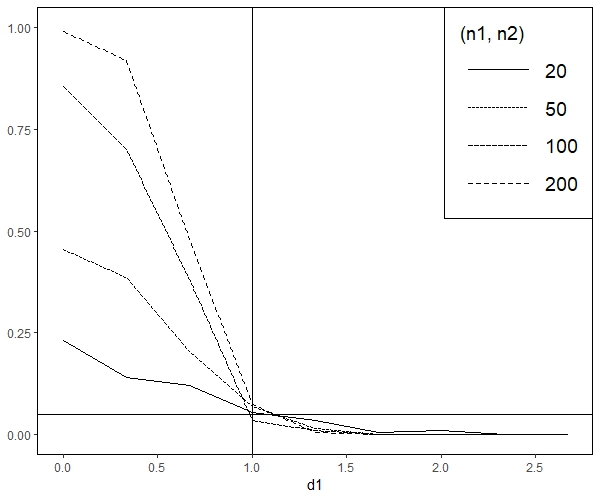}
    \includegraphics[width=5cm, height=4cm]{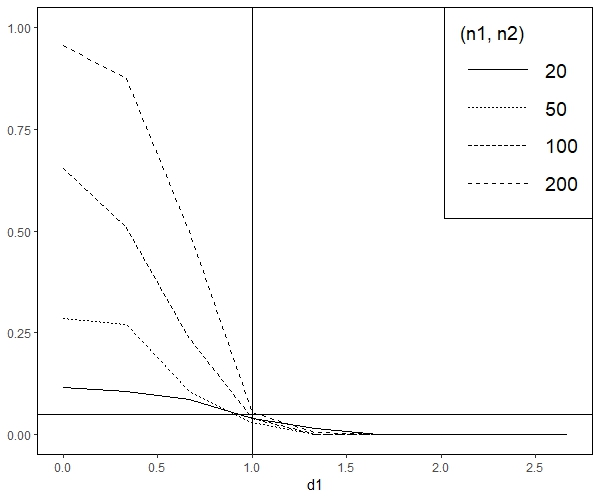}
~~~~ ~~~~
\includegraphics[width=5cm, height=4cm]{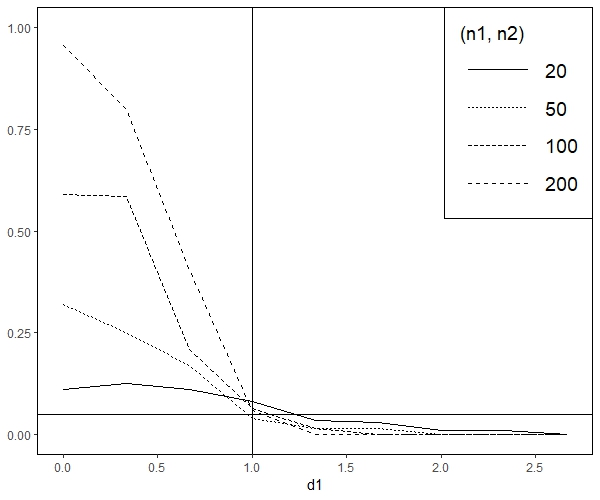}
\vspace{-0.25cm}
    \caption{\it Rejection probabilities 
        of the  constrained  bootstrap  test \eqref{h7} 
        (Algorithm \ref{alg2})
        for the hypotheses \eqref{l3}. Left panels: intersecting 
        E-max models with parameters \eqref{inter}
     Right panels: parallel 
        E-max models  with  parameters \eqref{par}. 
            First row: $\sigma_1^2 =  \sigma_2^2 = 0.25$; Second row:   $\sigma_1^2 = 0.25,$ $ \sigma_2^2 = 0.5$.
            The nominal level is $\alpha =0.05$.
            }
             \label{fig1a}
\end{figure}

In Figure \ref{fig1a} we illustrate the performance of the test \eqref{h7}  (constrained  bootstrap  - see  Algorithm \ref{alg2}) for testing the hypotheses \eqref{l3} with $\epsilon =1$. The rejection probabilities for the situation investigated in 
Figure \ref{fig1} (parallel E-Max models defined by \eqref{par})
are shown in the right part of the figure.
          These results are directly comparable with the right panels in Figure \ref{fig1}.
          We observe that the constrained bootstrap  test \eqref{h7} yields a substantial improvement in  power compared to the test  \eqref{hd21}, which is based on the bootstrap confidence interval. For example, if 
$d_1=0$, $n_1=n_2= 50$, $\sigma_1^2=\sigma_2^2=(0.25,0.25)$,
    the test \eqref{hd21} has approximately power 
   $0.115$, 
while the power of the constrained bootstrap test is 
 $0.455$.   The left part of the   Figure  \ref{fig1a} shows the results for intersecting 
E-Max models (defined by \eqref{inter}).
A comparison   with the right part shows  that  the differences in power between the two cases (intersecting E-Max and shifted  E-Max curves) are rather small.

         \section{ Proofs}
        	\label{appendix}
           \def\theequation{5.\arabic{equation}}
  \setcounter{equation}{0}
        	
        	In this section we give proofs to all theoretical results in this work. For this purpose we require  the following assumptions:
        	\begin{description}
        		\item {\textbf{Assumption 1:} The errors $\eta_{\ell,i,j}$ have finite variance $\sigma_{\ell}^2$ and mean 0.}
        		
        		\item \textbf{Assumption 2:} The covariate region $\mathcal{X} \subset \mathbb{R}^{d}$ is compact and the number and location levels of $k_{\ell}$ does not depend on $n_{\ell}$ for $\ell =1,2$.
        		
        		\item {\textbf{Assumption 3:} All estimators of the parameters $\beta_{1}$,\ $\beta_{2}$ are computed over compact sets $B_{1} \subset \mathbb{R}^{p_{1}}$ and $B_{2} \subset \mathbb{R}^{p_{2}}$.}
        		
        		\item {\textbf{Assumption 4:} The regression functions $m_{1}$ and $m_{2}$ are twice continuously differentiable with respect to the parameters for all $b_{1}, b_{2}$ in the neighbourhoods of the true parameters $\beta_{1}, \beta_{2}$ and all $x \in \mathcal{X}$. The functions $(x,b_{\ell}) \rightarrowtail m_{\ell}(x,b_{\ell})$ and their first two derivatives are continuous on $ \mathcal{X} \times B_{\ell}$ for $\ell=1,2$.}
        		
        		\item {\textbf{Assumption 5:} Defining 
        		$$ \psi^{(n)}_{a,\ell}(b) \coloneqq \sum_{i=1}^{k_{\ell}} \dfrac{n_{\ell,i}}{n_{\ell}}(m_{\ell}(x_{\ell,i},a)-m_{\ell}(x_{\ell,i},b))^2,$$
        		we assume that for any $u >0$ there exists a constant $v_{u,\ell} > 0 $ such that 
        		$$  \liminf_{n \to \infty} \inf_{a \in B_{\ell}} \inf_{ \abs{b-a} \geq u} \psi^{(n)}_{a,\ell}(b) \geq v_{u,\ell}, \ \ \ell = 1,2.  $$}

        		\item {\textbf{Assumption 6:} The matrices $\Sigma_{\ell}$ are non-singular and the sample sizes $n_{1},n_{2}$ converge to infinity such that 
        		$$ \lim_{n_{\ell} \to \infty} \dfrac{n_{\ell,i}}{n_{\ell}} = \xi_{\ell,i} > 0, \ \ i = 1, \ldots ,k_{\ell}, \ \ell =1,2. $$ and
        		$$ \lim_{n_{1},n_{2} \to \infty} \dfrac{n}{n_{1}} = \kappa  \in (1,\infty). $$}
        		    		
        		\item \textbf{Assumption 7:} We denote by $\hat{\beta}_{1}, \hat{\beta}_{2}$ estimators of the parameters $\beta_{1}, \beta_{2}$ and assume that they can be linearized, meaning the estimators fulfill the following condition:
        		$$ \sqrt{n_{\ell}}(\hat{\beta_{\ell}}-\beta_{\ell}) = \dfrac{1}{\sqrt{n_{\ell}}}\sum_{i=1}^{k_{\ell}}\sum_{j=1}^{n_{\ell,i}} \phi_{\ell,i,j} + o_{\mathbb{P}}(1) \ \text{ as } n_{\ell} \to \infty, \ \ell =1,2$$
        		with square integrable influence functions $\phi_{1,i,j}$ and $\phi_{2,i,j}$ satisfying 
        		$$ \mathbb{E}[\phi_{\ell,i,j}] = 0, \ j=1, \ldots ,n_{\ell,i}, \ i = 1, \ldots ,k_{\ell}, \ \ell=1,2. $$
        		This implies that the asymptotic distribution of $\hat{\beta_{1}}$ and $\hat{\beta_{2}}$ is given by
        		$$ \sqrt{n_{\ell}} ( \hat{\beta_{\ell}} - \beta_{\ell} ) \xrightarrow{d} \mathcal{N}(0,\Sigma_{\ell}^{-1}), \ \ell =1,2,$$
        		where the asymptotic covariance matrix is given by
        		$$ \Sigma_{\ell}^{-1} = \sum_{i=1}^{k_{\ell}} \xi_{\ell,i} \mathbb{E}[\phi_{\ell,i,j}\phi_{\ell,i,j}^{\top}], \ \ell =1,2.$$
          Moreover, the variance estimators $\hat\sigma_{1}^2$ and $\hat\sigma_{2}^2$  used  in \eqref{h6} are consistent.
    	
        	\end{description}
        	
        	\subsection{Proof of Theorem \ref{thm1}}
        		We will prove this result by an application of the (functional) delta method for {\it directionally} differentiable functionals as stated in Theorem 2.1 in \cite{shapiro1991asymptotic}. 
          We introduce the notations
           $ \theta (x)  =  m_{1}(x,\beta_{1}) - m_{2}(x,\beta_{2}) $,
           $ \hat \theta (x)  =  m_{1}(x,\hat\beta_{1}) - m_{2}(x, \hat \beta_{2}), $ 
           $ \theta  =   \{ \theta (x)   \}_{x \in {\cal X}} $ and $ \hat \theta  =   \{\hat\theta (x)   \}_{x \in {\cal X}} $, and 
          will show  below that the mapping
        			\begin{eqnarray}
        				\label{l40}
        				\Phi: 
            \begin{cases}
        &  \ell^{\infty}(\mathcal{X}) \to \mathbb{R} \nonumber \\ 
        	     &    f \to 	\Phi(f) = \int_{\mathcal{X}} \abs{f(x)} \ts dx
                     \end{cases} 
        				\end{eqnarray}
            is {\it directionally } Hadamard differentiable with respect to the $L^{1}$-norm on $\ell^{\infty}(\mathcal{X})$ and the absolute value norm on $\mathbb{R}$, where the
            derivative is given by   
        			\begin{equation*}
       \Phi_{h}^{'}:~     \begin{cases}
      				&  \ell^{\infty}(\mathcal{X}) \to \mathbb{R} 
            \\
        		& f \to 		\Phi_{h}^{'}(f) = \int_{ \{ h \neq 0 \} } \operatorname{sgn}(h(x))f(x) \ts dx \ + \int_{ \{ h = 0 \} } \abs{f(x)} \ts dx 
    \end{cases} 
       			\end{equation*}           
        				at $ h \in \ell^{\infty}(\mathcal{X})$. Note that $\left(\ell^{\infty}(\mathcal{X}),\norm{\cdot}_1\right)$  is still separable and that its norm is weaker than the sup-norm. 

            Hence the convergence in distribution
        				\begin{align*}
        				     \sqrt{n}\big \{ \hat \theta (x)-  \theta(x)\big \}_{x \in \mathcal{ X}} \xrightarrow{d} \{G(x)\}_{x \in \mathcal{X}}
        				\end{align*}
             in $\left(\ell^{\infty}(\mathcal{X}),\norm{\cdot}_\infty\right)$ 
        				established in \cite{dette2018equivalence} is also valid in this setting. In particular, applying the (directional) delta method  (Theorem 2.1 in \cite{shapiro1991asymptotic}) gives 
        		\begin{align*}
        		\sqrt{n}(\hat d_{1}-d_{1}) &= \sqrt{n}\int_{\mathcal{X}} \ | {\hat \theta(x)}|  - \abs{ \theta(x)} \ dx =  \sqrt{n} \left( \Phi( \{\hat \theta(x) \}_{x \in \mathcal{X}} ) - \Phi( \{ \theta(x) \}_{x \in \mathcal{X}}) ) \right) \\
        		&\xrightarrow{d} \Phi^{'}_{ \theta  }( \{G(x)\}_{x \in \mathcal{X}}  ) = \int_{{{\cal N}}^c} \operatorname{sgn}(\theta(x)) \ G(x) \ dx 
          + \int_{{{\cal N}}} \abs{G(x)} \ dx,
        		\end{align*}
          where ${{\cal N}}$ is defined in \eqref{h1}.
Therefore, we   are left with showing the differentiability of the functional $\Phi$. For this purpose we  write    $ \Phi = \Phi_{1} \circ \Phi_{2}, $
        		where
        		\begin{align*}
        		\label{b1}
        		\Phi_{1}: &
          \begin{cases}
        &  \ell^{\infty}(\mathcal{X}) \to \mathbb{R} \\
        	& f \to 	\Phi_{1}(f) = \int_{\mathcal{X}} f(x) dx ~
\end{cases}~,  ~~~~
        		\Phi_{2}:     
        \begin{cases}
        &  \ell^{\infty}(\mathcal{X}) \to \l^{\infty}(\mathcal{X}) \\
        	&	\Phi_{2}(f) = \abs{f}
          \end{cases} 
        		\end{align*}
As a linear mapping $\Phi_{1}$ is obviously Hadamard differentiable with derivative $(\Phi_{1})_{h}^{'} = \Phi_{1}$ at $ h \in \ell^{\infty}(\mathcal{X})$, where the latter is equipped with the $L_1$ norm. 
        		We prove below that
 $\Phi_{2}$ is directionally Hadamard differentiable with respect to the $L^{1}$-norm on $\ell^{\infty}(\mathcal{X})$ and derivative 
        			\begin{eqnarray}
           \label{hd1}
        			(\Phi_{2})_{h}^{'}:
           \begin{cases}
          &  \ell^{\infty}(\mathcal{X}) \to \ell^{\infty}(\mathcal{X}) \\
        			&  f \to 
           (\Phi_{2})_{h}^{'}(f) = \mathbbm{1}_{\{ h \neq 0 \}} \operatorname{sgn}(h)f + \mathbbm{1}_{\{h = 0\}} \abs{f}   
           \end{cases}
        			\end{eqnarray}
        			at $ h \in \ell^{\infty}(\mathcal{X})$.
 The assertion then follows by the chain rule given in Proposition 3.6 in \cite{shapiro1990concepts}.
        
        	For a proof of \eqref{hd1}
let $(x_{n})$ be a sequence in $\l^{\infty}(\mathcal{X})$ converging to $x$ and $(t_{n})$ be a sequence of positive real numbers converging to zero. We show that 
        	\begin{equation} 
      \label{hd2}   
           \norm{
          \frac{\Phi_{2}(h+t_{n}x_{n}) - \Phi_{2}(h)}{t_{n}} - (\Phi_{2})_{h}^{'}(x)
          }_{1} \xrightarrow{n \to \infty} 0 ,
        \end{equation}
          where 
          $$
          Z_{n} = \frac{\Phi_{2}(h+t_{n}x_{n}) - \Phi_{2}(h)}{t_{n}} - (\Phi_{2})_{h}^{'}(x) ~.
          $$
    This proves the claim. 
        		
   For a proof of \eqref{hd2} note that this statement is equivalent to 
        		\begin{align*}
          \text{(A2.1)}  & ~~~~~~ \ Z_{n}  \xrightarrow{\lambda} 0  ~, \\
        		\text{(A2.2)}  & ~~~~~~  \ (Z_{n}) \text{ is uniformly integrable }, 
        		\end{align*}
    where $\xrightarrow{\lambda}$ denotes $\lambda$-stochastic convergence  \citep[see Theorem 21.4 and the preceding definitions in][]{bauer2011measure}.
        		
        		\textit{Proof (A2.1).} To prove this statement, it suffices to show that every subsequence $(Z_{n_{k}})$ of $(Z_{n})$ has a further subsequence $(Z_{n_{k_{j}}})$ which converges to zero almost everywhere. So let $(Z_{n_{k}})$ be a subsequence of $(Z_{n})$. Since $x_{n} \xrightarrow{\norm{\cdot}_{1}} x$ by assumption, we know that $x_{n_{k}} \xrightarrow{\norm{\cdot}_{1}} x$. Theorem 15.7 in \cite{bauer2011measure} then implies that there exists a subsequence $(x_{n_{k_{j}}})$ such that $x_{n_{k_{j}}} \xrightarrow{a.e} x$. We conclude that $Z_{n_{k_{j}}} \xrightarrow{a.e} 0$ by the following:
        		\begin{itemize}
        			\item[1)] On the set $\{ t \in \mathbb{R} \mid x_{n_{k_{j}}}(t) \to x(t), \ h(t) = 0 \}$ we have 
           $$
           Z_{n_{k_{j}}}(t) = \big | {x_{n_{k_{j}}}(t)} \big | -\abs{x(t)} \xrightarrow{ j \to \infty} 0.
           $$
        			\item[2)] On the set $\{ t \in \mathbb{R} \mid x_{n_{k_{j}}}(t) \to x(t), \ h(t) > 0 \}$ we have for sufficiently large $j$ $$
           Z_{n_{k_{j}}}(t) = x_{n_{k_{j}}}(t)-x(t) \xrightarrow{ j \to \infty} 0.
           $$
        			 \item[3)] On the set $\{ t \in \mathbb{R} \mid x_{n_{k_{j}}}(t) \to x(t), \ h(t) < 0 \}$ we have for sufficiently large $j$ $$
            Z_{n_{k_{j}}}(t) = - x_{n_{k_{j}}}(t)+x(t) \xrightarrow{ j \to \infty} 0.
            $$
         		\end{itemize}
         		\textit{Proof (A2.2).} We have that 
         		\begin{align*}
         		 \abs{Z_{n}} &= \abs{\frac{\abs{h+t_{n}x_{n}} - \abs{h}}{t_{n}} - (\Phi_{2})_{h}^{'}(x)}  
         		 \leq \abs{\frac{\abs{h+t_{n}x_{n}} - \abs{h}}{t_{n}}} + \abs{(\Phi_{2})_{h}^{'}(x)} \\
         		 &\leq \frac{ \abs{h+t_{n}x_{n} - h}  }{t_{n}} + \abs{x} 
         		 = \abs{x_{n}} + \abs{x}
         		\end{align*}
         		where we have used 
           the 
           definition of $(\Phi_{2})_{h}^{'}$ for the second inequality. Now $\abs{x_{n}}$ is uniformly integrable, since $x_{n} \xrightarrow{\norm{\cdot}_{1}} x$ and  $\abs{x}$ is uniformly integrable, since $x$ is bounded. Therefore $\abs{x_{n}} + \abs{x}$ is uniformly integrable. Since $\abs{Z_{n}}$ is dominated by $\abs{x_{n}} + \abs{x}$, the result follows.

        	\subsection{Proof of Theorem \ref{thm22}}

          We first show that, unconditionally, 
          \begin{eqnarray}
              \label{det100}
           \hat T \xrightarrow{\mathbb{P}} T~.   
          \end{eqnarray}
          For this purpose 
          we note  that it follows from 
          Assumption 1-7 that 
                \begin{eqnarray}
         \label{p1}
         \| {\theta-\hat \theta} \|_\infty=O_\mathbbm{P}(n^{-1/2})    .
         \end{eqnarray}
         Next we   define 
     \begin{align*}
     S:=\int_{
        		{\cal N}^c} \operatorname{sgn}\theta(x) \ \hat G(x) \ dx  + \int_{
        		{\cal N} }  | \hat G(x) | \ dx      
     \end{align*}
     and let   $\lambda $ denote the Lebesgue measure on $\mathcal{X}$. By the triangle inequality we have
     \begin{eqnarray*}
         |{T - \hat T} | \leq \abs{T-S}+ | {S-\hat T} |.
     \end{eqnarray*} 
     Note that $ \left \{\hat G(x)-G(x) \right\}_{x \in \mathcal{X}}$ tends to  $0 \in (\ell_\infty(\mathcal{X}),\norm{\cdot}_\infty)$ in probability by Slutsky's Theorem, 
     which implies 
     $$
  |  S-T |  \leq \| {\hat G-G} \| _1\leq \lambda (\mathcal{X})\|{\hat G-G}\| _\infty~,
  $$
  and as a consequence $\abs{T-S} \to 0
     $
     in probability. We are hence left with showing that $| {S-\hat T} | \to 0$ in probability. To this  end we observe
        		\begin{eqnarray*}
        		    \vert S - \hat T\vert\leq  A+B
        		\end{eqnarray*}
        		where
        		\begin{align*}
        		    A&=\Big \vert \int_{ \hat{\mathcal{N}}^c } \operatorname{sgn}(\hat \theta(x))\ts \hat{G}(x) \ dx \ - \int_{ \mathcal{N}^c }\operatorname{sgn}(\theta(x))\ts \hat{G}(x)\ dx  \Big \vert\\
        		    B&=\Big \vert \int_{ \hat{\mathcal{N}} } \vert\hat{G}(x)\vert \ dx \ - \int_{ \mathcal{N}}\vert \hat{G}(x) \vert \ dx  \Big \vert
        		\end{align*}
        		We will only show that $A\to 0$ in probability, the 
        		corresponding result for $B$ follows by similar arguments. Note that
        		\begin{align*}
        		 A &\leq \Big \vert \int_{ \hat{\mathcal{N}}^c \cap \mathcal{N}  } \operatorname{sgn}(\hat \theta(x))\ts \hat{G}(x) \ dx 
      		 - 
        		 \int_{\mathcal{N}^c \cap \hat{\mathcal{N}} }\operatorname{sgn}(\theta(x))\ts \hat{G}(x)\ dx  \Big \vert\\& \quad \quad + \Big \vert\int_{\mathcal{N}^c\cap \hat{\mathcal{N}}^c } \big ( \operatorname{sgn}(\hat \theta(x)) -\operatorname{sgn}(\theta(x))
        		 \big ) \hat G(x) dx
        		 \Big \vert\\
        		 &\leq \lambda (\hat{\mathcal{N}}^c \cap \mathcal{N})\| {\hat G} \| _\infty+\lambda (\mathcal{N}^c \cap \hat{\mathcal{N}})\| {\hat G} \| _\infty+o_\mathbbm{P}(1)~, 
        		\end{align*}
        		where the last inequality is true because with high probability  the signs in the third integral cancel each other other out on $\hat{\mathcal{N}}^c \cap \mathcal{N}^c$.
        		This can be seen by  recalling the definition of the set  $\mathcal{N}^c$ and equation \eqref{p1}. The other two terms vanish due to $\hat G(x)$ being bounded in probability by virtue of its tightness and because
        		\begin{align*}
        		    \lambda (\hat{\mathcal{N}}^c \cap \mathcal{N})&=\lambda 
        		    \big ( \{ x ~|~\hat \theta(x) \geq c\sqrt{\log(n)/n} , \theta(x)=0 \} \big ) \\
        		    &\leq \lambda  \big ( \{ x ~|~ \sqrt{n}(\hat \theta(x)-\theta(x))\geq c\log(n) \} \big )  =o_\mathbbm{P}(1)
        		\end{align*}
        		due to \eqref{p1}. As a similar inequality holds true for the set $\mathcal{N}^c \cap \hat{\mathcal{N}}$, this concludes the proof of \eqref{det100}. \\

            

            Define $\mathcal{Y} := \{   Y_{\ell,i,j}  : \ell = 1,2, \ i = 1, ... , k_{\ell}, \ j=1,\ldots , n_{\ell, i} \} $, consider the conditional distribution $\mathbb{P}^{\hat{T}\vert  \mathcal{Y} }$ and note that by the previous argument we have
            \begin{align*}
                \mathbb{P}(\hat{T}-T \in \mathcal{A} )=\int \mathbb{P}^{\hat{T}-T\vert \mathcal{Y}  }(\mathcal{A})d\mathbb{P} \rightarrow 0
            \end{align*}
            which implies that $\hat{T}-T\vert \mathcal{Y} \rightarrow 0$ in probability by a suitable choice of a countable family of $\mathcal{A}$ and a repeated subsequence argument.  We hence obtain that $\hat q_{0, \alpha}$ converges to $q_\alpha$ in probability. As all quantities of which we take limits in the following are real valued we may assume WLOG that it does so even almost surely.\\
            Observe that
         \begin{align*}
          \mathbb{P}\Big( d_1 \in [0, \hat{d}_1 - \dfrac{\hat q_{0, \alpha}}{\sqrt{n}}] \Big) &= 1 - \mathbb{P}\Big(\sqrt{n}(\hat{d}_1 - d_1 )  \leq \hat q_{0, \alpha} \Big)\\
          &= 1 - \mathbb{P}\Big(\sqrt{n}(\hat{d}_1 - d_1 )  \leq  q_\alpha+o(1) \Big)
        \end{align*}
        By Egorovs Theorem we may assume that the o(1) term vanishes uniformly on a set of measure $\delta$ for any $\delta >0$, to be precise for $n(m)$ large enough we have $o(1)\leq 1/m$ on a set $\mathcal{A}_m$ that has measure at least $1-1/m$. Hence for $n\geq n(m)$ we obtain
        \begin{align*}
            \mathbb{P}\Big(\sqrt{n}(\hat{d}_1 - d_1 )  \leq  q_\alpha+o(1) \Big)&\leq  
            \mathbb{P}\Big(\sqrt{n}(\hat{d}_1 - d_1 )  \leq  q_\alpha+1/m,\mathcal{A}_m\Big)+\mathbb{P}(\mathcal{A}_m^c)\\ &\leq \mathbb{P}\Big(\sqrt{n}(\hat{d}_1 - d_1 ) \leq  q_\alpha+1/m\Big)+1/m
        \end{align*}
        A similar lower bound can be obtained by the same arguments. Letting $n$ go to infinity then establishes 
        \begin{align*}
            \mathbb{P}\Big( d_1 \in [0, \hat{d}_1 - \dfrac{\hat q_{0, \alpha}}{\sqrt{n}}] \Big) = 1-\mathbb{P}\Big(\sqrt{n}(\hat{d}_1 - d_1 )  \leq  \hat q_{0, \alpha} \Big)\rightarrow 1-\alpha
        \end{align*}
       because the convergence of the distribution functions of $\sqrt{n}(\hat d_{1} - d_{1}) $ is uniform for all continuity points of  $F_T$.
    
      This proves the first part of Theorem \ref{thm22}.\\

        For the test in \eqref{h88} we have under the null hypothesis 
    $H_0: d_1 \ge \epsilon $ that
    $\epsilon - d_{1} \leq 0$,
    which implies for the probability of rejection
        	\begin{align*}
        		\mathbb{P}\Big (\hat d_{1} < \epsilon + \dfrac{\hat q_{0, \alpha}}{\sqrt{n}} \Big ) &= \mathbb{P}(\sqrt{n}(\hat d_{1} - d_{1}) < \sqrt{n}(\epsilon - d_{1}) + \hat q_{0, \alpha}) \\
        		&\leq \mathbb{P}(\sqrt{n}(\hat d_{1} - d_{1}) < \hat q_{0, \alpha}) \\
        		&= \mathbb{P}(\sqrt{n}(\hat d_{1} - d_{1}) < q_{\alpha} - (q_{\alpha} - \hat q_{0, \alpha}) )
        		\xrightarrow{n \to \infty} \alpha, \label{l46}
        		\end{align*}		
        	 where the convergence follows from similar arguments as for the first part of the theorem and Theorem \ref{thm1}. Consequently,  the  decision rule  \eqref{h88} defines an asymptotic level $\alpha$-test. Similarly, under the alternative, we have
                $\epsilon - d_{1} > 0$, which yields consistency, that is
        		\begin{align*}
        		\mathbb{P} \Big (\hat d_{1} < \epsilon + \dfrac{\hat q_{0, \alpha}}{\sqrt{n}} \Big ) &= \mathbb{P}(\sqrt{n}(\hat d_{1} - d_{1}) < \sqrt{n}(\epsilon - d_{1}) + \hat q_{0, \alpha}) \xrightarrow{n \to \infty} 1,
        		\end{align*} 
    since $\hat q_{0, \alpha} \xrightarrow{\mathbb{P}} q_{\alpha}$ and $\sqrt{n}(\epsilon - d_{1}) \xrightarrow{n \to \infty} \infty$ imply  $\sqrt{n}(\epsilon - d_{1}) + \hat q_{0, \alpha} \xrightarrow{n \to \infty} \infty$ and we know that $\sqrt{n}(\hat d_{1} - d_{1})$ converges in distribution by Theorem \ref{thm1}. 

\subsection{Proof of Theorem \ref{thm23}}
We start with proving the properties of the test. We have
\begin{align*}
  \mathbb{P}\big(  \hat{q}_{1-\alpha,0 }^{*}< \epsilon \big)  &= \mathbb{P}\big(\sqrt{n}(\hat{q}_{1-\alpha,0 }^{*}-\hat d_1)< \sqrt{n}(\epsilon-d_1)+\sqrt{n}(d_1-\hat d_1)\big) 
\end{align*}

Following the arguments in the proof of Theorem 2 of \cite{dette2018equivalence} (where we use Theorem 23.9 from \cite{van2000asymptotic} instead of an explicit first order expansion and the continuous mapping theorem) we obtain that 
\begin{align}
    \sqrt{n}(\hat{q}_{1-\alpha,0 }^{*}-\hat d_1) \xrightarrow{\mathbb{P}} q_{1-\alpha} 
\end{align}
where  $q_{1-\alpha}$ is the $1-\alpha$ quantile of the Random Variable $T$ defined in \eqref{h4}. Since $\sqrt{n}(d_1-\hat d_1)$ converges in distribution to $T$ by Theorem \ref{thm1}, $T$ is symmetric when $\lambda(\mathcal{N})=0$, $\sqrt{n}(\epsilon-d_1)$ converges to zero if $d_1=\epsilon$ and to $\pm \infty$ in the alternative/remainder of the null hypothesis, we obtain the desired statement on the significance level and the consistency of the test.\\

For the confidence interval we observe that
\begin{align*}
    \mathbb{P}\big( d_1 \in \hat I_n^{*}\big)&= \mathbb{P}\big(  d_1< \hat{q}_{1-\alpha,0 }^{*} \big) = \mathbb{P}\big(\sqrt{n}(d_1-\hat d_1)< \sqrt{n}(\hat{q}_{1-\alpha,0 }^{*}-\hat d_1)\big) 
\end{align*}
which yields the desired statement by \eqref{l46}.
        	\subsection{Proof of Theorem \ref{thm2}}
         
      {\bf  Proof of a).} 
       	 	First, we determine the asymptotic distribution of the bootstrap test statistic $\hat d_{1}^{*}$. Define $\hat \theta^{*}(x) = m_1(x, \hat \beta_1^*) - m_2(x, \hat \beta_2^*) $ and $\hat{\hat{\theta}} (x) = m_1(x, \hat{\hat{\beta}}_1) - m_2(x, \hat{\hat{\beta}}_2)$.  Following the proof of Theorem 1 in \cite{dette2018equivalence} yields that conditionally on $\mathcal{Y}$ in probability
       			\begin{eqnarray*}
       			   \big \{\sqrt{n}\big (\hat \theta^*(x) -\hat{\hat{\theta}}(x) \big ) \big \}_{x \in \mathcal{X}} \xrightarrow{d} \{G(x)\}_{x \in \mathcal{X}}.
       			\end{eqnarray*}
          By assumption the directional hadamard derivative $\Phi'_{\theta}$ is linear and thus a proper hadamard derivative which allows us to apply the delta method for the bootstrap as stated in Theorem 23.9 in \cite{van2000asymptotic}. Consequently we obtain
       		\begin{eqnarray}
       		\sqrt{n} \big (\hat d_{1}^{*} - \hat{\hat d}_{1} \big )
       		&= \sqrt{n} \big ( \Phi( \{ \hat \theta^*(x) \}_{x \in \mathcal{X}} ) -  \Phi( \{ \hat{\hat{\theta}}(x) \}_{x \in \mathcal{X}} ) \big ) \nonumber  \xrightarrow{d} \Phi^{'}_{ \theta  }( \{G(x)\}_{x \in \mathcal{X}} ) 
       		\end{eqnarray}
       		conditionally on $\mathcal{Y}$ in probability. \\

       		\textit{\underbar{Case 1:  $d_{1} > \epsilon$.}} We observe that
       		\begin{eqnarray}
         \nonumber 
       		\mathbb{P}(\hat d_{1} < \hat{q}_{\alpha,1}^* ) &=& \mathbb{P}(\hat d_{1} < \hat{q}_{\alpha,1}^* \ , \hat d_{1} \geq \epsilon ) + \mathbb{P}(\hat d_{1} < \hat{q}_{\alpha,1}^* \ , \hat d_{1} < \epsilon ) \  \\
               \nonumber 
       		&\leq& \mathbb{P}(\hat d_{1} < \hat{q}_{\alpha,1}^* \ , \hat{\hat d}_{1} = \hat d_{1} ) + \mathbb{P}( \hat d_{1} < \epsilon)  
         \nonumber \\
       		&\leq& \mathbb{P}( \hat{\hat d}_{1} < \hat{q}_{\alpha,1}^* ) + \mathbb{P}( \sqrt{n}( \hat d_{1} - d_{1} ) < \sqrt{n}(\epsilon-d_{1}) ). \label{l15}
       		\end{eqnarray}
       		We now show that the first sequence in the upper bound \eqref{l15} converges to zero. 
       		To prove this, first note that for all $\alpha \in (0,1) $
       		\begin{eqnarray}
       		\label{l16}
       		\sqrt{n} \ (\hat{q}_{\alpha,1}^*-\hat{\hat d}_{1}) \xrightarrow{\mathbb{P}} q_{\alpha}, 
       		\end{eqnarray}
       		where $q_{\alpha}$ denotes the $\alpha$-quantile of the random variable $T$ defined in \eqref{h4}. To see this, observe that 
       		\begin{eqnarray*}
       		\alpha &=& \mathbb{P}( \hat d_{1}^{*} < \hat{q}_{\alpha,1}^* \mid \mathcal{Y}  ) =  \mathbb{P}( \sqrt{n} \ts (\hat d_{1}^{*}-\hat{\hat d}_{1}) < \sqrt{n} ( \hat{q}_{\alpha,1}^*-\hat{\hat d}_{1} )  \mid \mathcal{Y} ) \ \text{ a.s }. 
       		\end{eqnarray*}
       		Since $\sqrt{n}( \hat d_{1}^{*} - \hat{\hat d}_{1} )$ converges in distribution to $T$ conditionally on $\mathcal{Y}$ in probability, Lemma 21.2 in \cite{van2000asymptotic} yields the result \eqref{l16}. Using \eqref{l16} and choosing $\alpha > 0$ small enough such that $q_\alpha < 0$, we obtain
       		\begin{eqnarray*}
       		\mathbb{P}( \hat{\hat d}_{1} < \hat{q}_{\alpha,1}^* ) &=& \mathbb{P}( \sqrt{n}( \hat{q}_{\alpha,1}^* - \hat{\hat d}_{1}) > 0) 
       		\leq \mathbb{P} \big (  \big | { \sqrt{n}( \hat{q}_{\alpha,1}^* - \hat{\hat d}_{1}) - q_{\alpha} } \big| > - q_{\alpha} \big   ) \xrightarrow{n \to \infty} 0.
       		\end{eqnarray*}
       		Finally, we show that the second sequence in the upper bound \eqref{l15} converges to zero. Since $d_{1} > \epsilon$ by assumption, we have that $\sqrt{n}(\epsilon - d_{1}) \to -\infty$ and from Theorem \ref{thm1} we know that $\sqrt{n}(\hat d_{1} - d_{1})$ converges in distribution. Therefore, the result follows. This concludes the proof of a) in the case $d_{1} > \epsilon$. \\
       		
       		\textit{\underbar{Case 2: $d_{1} = \epsilon$}.}
       		We observe that
       		\begin{eqnarray*}		
       		\mathbb{P}( \hat d_{1} < \hat{q}_{\alpha,1}^* ) &=& \mathbb{P}( \hat d_{1} < \hat{q}_{\alpha,1}^* \ , \hat d_{1} \geq \epsilon) + \mathbb{P}( \hat d_{1} < \hat{q}_{\alpha,1}^* \ , \hat d_{1} < \epsilon ) \label{l20}  \\
       		&=& \mathbb{P}( \hat d_{1} < \hat{q}_{\alpha,1}^* \ , \hat{\hat d}_{1} = \hat d_{1} ) + \mathbb{P}( \hat d_{1} < \hat{q}_{\alpha,1}^* \ , \hat{\hat d}_{1} = \epsilon ) - \mathbb{P}( \hat d_{1} < \hat{q}_{\alpha,1}^* \ , \hat d_{1} = \epsilon ) \label{l21} \\
       		&=& \mathbb{P}( \hat d_{1} < \hat{q}_{\alpha,1}^* \ , \hat{\hat d}_{1} = \hat d_{1} ) + \mathbb{P}( \hat d_{1} < \hat{q}_{\alpha,1}^* \ , \hat{\hat d}_{1} = \epsilon = d_{1} ) + o(1) \label{l22} \\
       		&=& \mathbb{P}( \sqrt{n}(\hat d_{1} - d_{1}) < \sqrt{n}(\hat{q}_{\alpha,1}^* - \hat{\hat d}_{1}) \ , \hat{\hat d}_{1} = \epsilon ) + o(1) \label{l23} \\
       		&=& \mathbb{P}( \sqrt{n}(\hat d_{1} - d_{1}) < \sqrt{n}(\hat{q}_{\alpha,1}^* - \hat{\hat d}_{1})) \label{l24} \\
                && - \mathbb{P}( \hat d_{1} - d_{1} < \hat{q}_{\alpha,1}^*  - \hat{\hat d}_{1} \ , \hat{\hat d}_{1} > \epsilon) + o(1). 
       		\end{eqnarray*}
       		Because of \eqref{l16} and Theorem \ref{thm1}, we have that 
       		\begin{eqnarray*}
       		    \label{l25}
       		    \mathbb{P}( \sqrt{n}(\hat d_{1} - d_{1}) < \sqrt{n}(\hat{q}_{\alpha,1}^* - \hat{\hat d}_1)) \xrightarrow{n \to \infty} \alpha.
       		\end{eqnarray*}
       		Since $ \hat{\hat d}_{1} > \epsilon$ implies $\hat d_{1} - d_{1} > 0$ and \eqref{l16} holds, we obtain
       		\begin{eqnarray*}
       		\label{l26}
       		\mathbb{P}( \hat d_{1} - d_{1} < \hat{q}_{\alpha,1}^* - \hat{\hat d}_{1} \ , \hat{\hat d}_{1} > \epsilon) \leq \mathbb{P}( 0 < \hat{q}_{\alpha,1}^* - \hat{\hat d}_{1} ) \xrightarrow{n \to \infty} 0,
       		\end{eqnarray*}
       		which completes the proof of a).
     \smallskip

    {\bf Proof of b).}
       		The result follows by the same arguments as given for the proof of the second statement of Theorem 2 in \cite{dette2018equivalence}. Only note that the map  $ 		 (b_{1},b_{2}) \mapsto d_{1}(b_{1},b_{2}) $
         from $
         B_{1} \times B_{2}$  onto  $\mathbb{R} $
       		is uniformly continuous, since it is a continuous function on a compact set.

       	\subsection{Proof of the statement 
        in Remark \ref{r1}(b)}

      Consider first the null hypothesis $H_0: d_1 \geq \epsilon$. 
       			  From Lemma \ref{b2} and Theorem 3.2 in \cite{fang2019inference} we know that conditionally in probability
       			\begin{eqnarray*}
       			\label{l27}
       			    \sqrt{n} \ts \hat{\Phi}^{'*} :=  \hat{\Phi}^{'}(\{\sqrt{n}( \hat{\theta}^{*}-\hat{\theta})\}_{x \in \mathcal{X}}) \xrightarrow{d} \Phi^{'}_{ \theta }( \{G(x)\}_{x \in \mathcal{X}} ) = T.
       			\end{eqnarray*}
       			\textit{\underbar{Case 1:  $d_{1} > \epsilon$.}} First note that for all $\alpha \in (0,1) $ we have
       			\begin{eqnarray}
       			\label{l28}
       			\sqrt{n} \ \hat{q}_{\alpha,1}^* \xrightarrow{\mathbb{P}} q_{\alpha},
       			\end{eqnarray}
       			where $q_{\alpha}$ denotes the $\alpha$-quantile of $T$ and $\hat{q}_{\alpha,1}^*$ denotes the $\alpha$-quantile of $\hat{\Phi}^{'*}$. To see this, observe that by definition of $\hat{q}_{\alpha,1}^*$ we have
       			\begin{eqnarray*}
       			\alpha &=& \mathbb{P}( \hat{\Phi}^{'*} < \hat{q}_{\alpha,1}^* \mid \mathcal{Y}  ) = \mathbb{P}( \sqrt{n} \ts \hat{\Phi}^{'*} < \sqrt{n} \ts \hat{q}_{\alpha,1}^* \mid \mathcal{Y}  ) \ \text{ a.s. } \label{l29}
       			\end{eqnarray*}
       			Since $\sqrt{n} \ts \hat{\Phi}^{'*}$ converges in distribution to $T$ conditionally on $\mathcal{Y}$ in probability, Lemma 21.2 in \cite{van2000asymptotic} yields the result \eqref{l28}. Using \eqref{l28} we then obtain that $ \sqrt{n} ( \hat{q}_{\alpha,1}^* + \epsilon - d_{1} ) \xrightarrow{n \to \infty} -\infty $, since $d_{1} > \epsilon$. Combining this result with the fact that $ \sqrt{n}(\hat d_{1}-d_{1}) $ converges in distribution by Theorem \ref{thm1}, we can conclude that
       			\begin{eqnarray*}
       			\label{l30}
       			\mathbb{P}(\hat d_{1} < \hat{q}_{\alpha,1}^* + \epsilon) &= \mathbb{P}(\sqrt{n}(\hat d_{1}-d_{1}) < \sqrt{n}(\hat{q}_{\alpha,1}^* + \epsilon - d_{1})) \xrightarrow{n \to \infty} 0.
       			\end{eqnarray*}

       			\textit{\underbar{Case 2: $d_{1} = \epsilon$}.} Since $ \sqrt{n}(\hat d_{1}-d_{1}) $ converges in distribution to $T$ and \eqref{l28} holds, we deduce that 
       			\begin{eqnarray*}
       			\label{l31}
       			\mathbb{P}(\hat d_{1} < \hat{q}_{\alpha,1}^* + \epsilon) &=& \mathbb{P}(\sqrt{n}(\hat d_{1}-d_{1}) < \sqrt{n}(\hat{q}_{\alpha,1}^* + \epsilon - d_{1})) \\
          &=& \mathbb{P}(\sqrt{n}(\hat d_{1}-d_{1}) < \sqrt{n} \ts \hat{q}_{\alpha,1}^* ) \xrightarrow{n \to \infty} \alpha.
       			\end{eqnarray*}
       			
  Next we consider the alternative $H_1: d_1 < \epsilon$. 
       			Using \eqref{l28} and $d_{1} < \epsilon$, we deduce that $\sqrt{n}(\hat{q}_{\alpha,1}^* + \epsilon - d_{1}) \xrightarrow{n \to \infty} \infty$. Since $ \sqrt{n}(\hat d_{1}-d_{1}) $ converges in distribution, this implies that
       			\begin{eqnarray*}
       			\label{l32}
       			\mathbb{P}(\hat d_{1} < \hat{q}_{\alpha,1}^* + \epsilon) &= \mathbb{P}(\sqrt{n}(\hat d_{1}-d_{1}) < \sqrt{n}(\hat{q}_{\alpha,1}^* + \epsilon - d_{1})) \xrightarrow{n \to \infty} 1.
       			\end{eqnarray*}

        	\begin{Lem}
        	     \label{b2}
        	 {\it 
        	     The sequence of functions
       	    \begin{eqnarray*}
       	    \label{l33}
       	        \hat{\Phi}^{'}(h) \coloneqq \int_{ |{\hat{\theta}} | \geq  {1}/ s_{n}} \operatorname{sgn}(\hat{\theta}(x)) \ts h(x) \ dx \ + \int_{ |{\hat{\theta}} | <  {1}/ s_{n}}  \abs{h(x)} \ dx
       	    \end{eqnarray*}
       	    with $s_{n}/\sqrt{n} \to 0$ satisfies Assumption 4 in \cite{fang2019inference}, meaning for $h \in \l^{\infty}(\mathcal{X})$ we have
       	    \begin{eqnarray*}
       	    \label{l34}
       	        \big  | { \hat{\Phi}^{'}(h) - \Phi_{\theta}^{'}(h) }   \big  | \xrightarrow{\mathbb{P}} 0. 
       	    \end{eqnarray*}
       	    (note that since $\hat{\Phi}^{'}$ is Lipschitz continuous with respect to $\norm{\cdot}_{1}$, it suffices to prove this simpler condition; see \cite{fang2019inference}).
            }
        	 \end{Lem}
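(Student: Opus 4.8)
The plan is to establish the stated pointwise convergence for a fixed $h \in \ell^{\infty}(\mathcal{X})$; as the parenthetical remark indicates, the Lipschitz continuity of $\hat{\Phi}^{'}$ with respect to $\norm{\cdot}_1$ then upgrades this to the form of Assumption~4 required in \cite{fang2019inference}. Recall from the proof of Theorem~\ref{thm1} that the target derivative is
\begin{align*}
\Phi^{'}_{\theta}(h) = \int_{\mathcal{N}^c} \operatorname{sgn}(\theta(x))\ts h(x)\, dx + \int_{\mathcal{N}} \abs{h(x)}\, dx,
\end{align*}
so $\hat{\Phi}^{'}$ and $\Phi^{'}_{\theta}$ differ only through replacing the partition $(\mathcal{N}^c,\mathcal{N})$ of $\mathcal{X}$ by the random partition $(A_n, A_n^c)$, where $A_n := \{x : \abs{\hat{\theta}(x)} \geq 1/s_n\}$. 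The idea is to show that these two partitions agree up to a set of vanishing Lebesgue measure and that on their common part the integrands coincide, so that the whole difference is controlled by $\norm{h}_\infty$ times the measure of the discrepancy set $D_n := A_n^c \cap \mathcal{N}^c$.

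First I would introduce the good event $E_n := \{\norm{\hat{\theta}-\theta}_\infty \leq 1/(2 s_n)\}$. Since $\norm{\hat{\theta}-\theta}_\infty = O_\mathbb{P}(n^{-1/2})$ by \eqref{p1} and $s_n = o(\sqrt{n})$, we have $s_n \norm{\hat{\theta}-\theta}_\infty = O_\mathbb{P}(s_n/\sqrt{n}) = o_\mathbb{P}(1)$, hence $\mathbb{P}(E_n) \to 1$. On $E_n$ two inclusions hold: if $x \in A_n$ then $\abs{\theta(x)} \geq \abs{\hat{\theta}(x)} - \norm{\hat{\theta}-\theta}_\infty \geq 1/(2 s_n) > 0$, so $A_n \subseteq \mathcal{N}^c$ and, since the surviving term dominates the perturbation, $\operatorname{sgn}(\hat{\theta}(x)) = \operatorname{sgn}(\theta(x))$ there; conversely, if $x \in \mathcal{N}$ then $\abs{\hat{\theta}(x)} = \abs{\hat{\theta}(x)-\theta(x)} \leq 1/(2 s_n) < 1/s_n$, so $\mathcal{N} \subseteq A_n^c$. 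Partitioning $\mathcal{X}$ along both splittings and using these inclusions, the contributions over $A_n \cap \mathcal{N}^c$ cancel (equal signs) and those over $A_n^c \cap \mathcal{N}$ cancel (both integrands equal $\abs{h}$), leaving only the integral over $D_n$, whence on $E_n$
\begin{align*}
\abs{\hat{\Phi}^{'}(h) - \Phi^{'}_{\theta}(h)} = \Big| \int_{D_n} \big( \abs{h(x)} - \operatorname{sgn}(\theta(x))\ts h(x) \big)\, dx \Big| \leq 2\norm{h}_\infty\, \lambda(D_n).
\end{align*}

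It then remains to show $\lambda(D_n) \to 0$. On $E_n$, any $x \in D_n$ satisfies $\abs{\hat{\theta}(x)} < 1/s_n$ and $\theta(x) \neq 0$, hence $\abs{\theta(x)} < 1/s_n + 1/(2 s_n) = 3/(2 s_n)$, so $D_n \subseteq \{x : 0 < \abs{\theta(x)} < 3/(2 s_n)\}$. Since the threshold $1/s_n \to 0$, the sets $\{0 < \abs{\theta} < \delta\}$ decrease to the empty set as $\delta \downarrow 0$, and because $\mathcal{X}$ is compact $\lambda$ is finite, so continuity from above yields $\lambda(\{0 < \abs{\theta} < 3/(2 s_n)\}) \to 0$. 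Thus $\lambda(D_n) = o_\mathbb{P}(1)$, and combining with $\mathbb{P}(E_n) \to 1$ and the displayed bound gives $\abs{\hat{\Phi}^{'}(h) - \Phi^{'}_{\theta}(h)} \xrightarrow{\mathbb{P}} 0$.

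The hard part is the rate balance hidden in the tuning sequence, and I would flag that the proof in fact uses \emph{both} $1/s_n \to 0$ and $\sqrt{n}/s_n \to \infty$ (equivalently $s_n \to \infty$ with $s_n/\sqrt{n}\to 0$): the threshold $1/s_n$ must vanish so that genuinely nonzero points are eventually excluded from $\hat{\mathcal{N}}$—this is exactly what forces $\lambda(D_n)\to 0$—yet it must dominate the estimation noise $\norm{\hat{\theta}-\theta}_\infty = O_\mathbb{P}(n^{-1/2})$ so that the sign of $\theta$ is recovered on $A_n$ and every true zero is retained in $A_n^c$, which is what makes the two set inclusions on $E_n$ valid. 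Once these inclusions are secured, the sign cancellation and the measure-continuity argument are routine; the only genuine subtlety is verifying that the interval $(n^{-1/2}, o(1))$ for the admissible threshold is nonempty and that $\theta$ has no plateau of positive measure at a nonzero level that would obstruct $\lambda(\{0 < \abs{\theta} < \delta\}) \downarrow 0$, the latter being automatic here since that intersection is empty.
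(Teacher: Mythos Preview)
Your proof is correct and follows essentially the same route as the paper: both arguments partition $\mathcal{X}$ according to the sets $\{|\hat\theta|\geq 1/s_n\}$ and $\{|\theta|>0\}$, bound the discrepancy by $\norm{h}_\infty$ times the Lebesgue measure of the mismatch regions, and use $s_n/\sqrt{n}\to 0$ together with the tightness of $\sqrt{n}(\hat\theta-\theta)$ to make those measures vanish. Your version is somewhat more streamlined---working on a single good event $E_n$ and reducing everything to the one residual set $D_n$ rather than splitting into the paper's $A$ and $B$ terms---and you are right to flag explicitly that the argument needs $s_n\to\infty$ in addition to $s_n/\sqrt{n}\to 0$; the paper's ``can be handled similarly'' for $\lambda(M_2\cap M_1^c)$ tacitly relies on this, since that step requires $1/s_n\to 0$ so that $\lambda(\{0<|\theta|<c/s_n\})\downarrow 0$.
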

       	    \begin{proof}
       	        Defining 
       	        \begin{align*}
       	            A &\coloneqq \Big | { \int_{ | {\hat{\theta}}|  \geq  {1}/{s_{n}} } \operatorname{sgn}(\hat{\theta}(x)) \ts h(x) \ dx \ - \int_{ \abs{ \theta } > 0 } \operatorname{sgn}(\theta(x)) \ts h(x) \ dx  } \Big |  \\
       	            B &\coloneqq   \Big  | { \int_{   |  {\hat{\theta}}| <  {1}/{s_{n}} } \abs{h(x)} \ dx - \int_{ \abs{\theta} = 0 } \abs{h(x)} \ dx \ }
                    \Big | 
       	        \end{align*}
            we note that $\big  | { \hat{\Phi}^{'}(h) - \Phi_{\theta}^{'}(h) } \big  | \leq A + B$ by the triangle inequality. Therefore, it suffices to show that $A \xrightarrow{\mathbb{P}} 0$ and $B \xrightarrow{\mathbb{P}} 0$. In order to show the former (the latter can be proven by similar arguments), we define the sets
            $$ M_{1} \coloneqq  \Big\{  | \hat{\theta} | > \dfrac{1}{s_{n}} \Big\}, \quad M_{2} \coloneqq \Big\{ \abs{\theta} > 0 \Big\} $$ and note that 
            \begin{eqnarray}
                A \leq & \ts \ts \Big | 
                { \int _{ M_{1} \cap M_{2}^{c} } \operatorname{sgn}(\hat{\theta}(x)) \ts h(x) \ dx \ - \int_{ M_{2} \cap M_{1}^{c} } \operatorname{sgn}(\theta(x)) \ts h(x) \ dx  } 
                \Big |  \nonumber \\
                & \quad + \Big |  {  \int_{ M_{1} \cap M_{2} } \Big( \operatorname{sgn}(\hat{\theta}(x)) - \operatorname{sgn}(\theta(x)) \Big) \ts h(x) \ dx }  \Big | \nonumber   \\
                \leq& \ts \ts \lambda( M_{1} \cap M_{2}^{c} ) \norm{h}_{\infty} + \lambda(M_{2} \cap M_{1}^{c}) \ts \norm{h}_{\infty} + o_{\mathbb{P}}(1). \label{l36}
            \end{eqnarray}
            due to $\hat{\theta}\xrightarrow{\mathbb{P}} \theta$ and where $\lambda$ denotes the Lebesgue measure. Therefore, it suffices to show that the first two summands in \eqref{l36} converge to zero in probability. Regarding the first term, we have that 
            \begin{align*}
                \lambda\Big( M_{1} \cap M_{2}^{c} \Big) \ts  &= \lambda\Big( | {\hat{\theta}} | > \frac{1}{s_{n}} , \ts \theta = 0  \Big)  = \lambda\Big( s_{n} | {\hat{\theta} - \theta}|  > 1, \ts \theta = 0 \Big) \\
                &\leq \lambda\Big(s_{n} | { \hat{\theta} - \theta }|  > 1 \Big) = \lambda\Big(  \frac{s_{n}}{\sqrt{n}} \sqrt{n} | { \hat{\theta} - \theta }|  > 1 \Big),
            \end{align*}
            where the last term converges to zero, since $s_{n}/\sqrt{n} \to 0$ by assumption and since the sequence $\sqrt{n} | { \hat{\theta} - \theta }| $ is tight. The second summand can be handled similarly.
            
        	\end{proof}

\bigskip\noindent
\textbf{Acknowledgements:}  
This research is supported by the European Union through the European Joint Programme on Rare Diseases under the European Union's Horizon 2020 Research and Innovation Programme Grant Agreement Number 825575.        

       	    \bibliographystyle{apalike}
           	\bibliography{bibliography}

\begin{thebibliography}{}

\bibitem[Aitchison, 1964]{aitchison1964}
Aitchison, J. (1964).
\newblock Confidence-region tests.
\newblock {\em Journal of the Royal Statistical Society, Series B},
  26:462--476.

\bibitem[Bauer, 2011]{bauer2011measure}
Bauer, H. (2011).
\newblock {\em Measure and Integration Theory}.
\newblock de Gruyter.

\bibitem[Berger, 1982]{berger1982}
Berger, R.~L. (1982).
\newblock Multiparameter hypothesis testing and acceptance sampling.
\newblock {\em Technometrics}, 24:295--300.

\bibitem[Bradley, 1997]{bradley1997use}
Bradley, A.~P. (1997).
\newblock The use of the area under the roc curve in the evaluation of machine
  learning algorithms.
\newblock {\em Pattern recognition}, 30(7):1145--1159.

\bibitem[Chow and Liu, 1992]{chowliu1992}
Chow, S.-C. and Liu, P.-J. (1992).
\newblock {\em Design and Analysis of Bioavailability and Bioequivalence
  Studies}.
\newblock Marcel Dekker, New York.

\bibitem[Cox and Czanner, 2016]{coxczan2016}
Cox, T. and Czanner, G. (2016).
\newblock A practical divergence measure for survival distributions that can be
  estimated from kaplan-meier curves.
\newblock {\em Statistics in Medicine}, 35.

\bibitem[Dette et~al., 2018]{dette2018equivalence}
Dette, H., M{\"o}llenhoff, K., Volgushev, S., and Bretz, F. (2018).
\newblock Equivalence of regression curves.
\newblock {\em Journal of the American Statistical Association},
  113(522):711--729.

\bibitem[EMA, 2014]{ema}
EMA (2014).
\newblock Guideline on the investigation of bioequivalence.
\newblock available at
  \url{http://www.ema.europa.eu/docs/en_GB/document_library/Scientific_guideline/2010/01/WC500070039.pdf}.

\bibitem[Fang and Santos, 2019]{fang2019inference}
Fang, Z. and Santos, A. (2019).
\newblock Inference on directionally differentiable functions.
\newblock {\em The Review of Economic Studies}, 86(1):377--412.

\bibitem[Gsteiger et~al., 2011]{gsteiger2011}
Gsteiger, S., Bretz, F., and Liu, W. (2011).
\newblock Simultaneous confidence bands for nonlinear regression models with
  application to population pharmacokinetic analyses.
\newblock {\em Journal of Biopharmaceutical Statistics}, 21(4):708--725.

\bibitem[Hauschke et~al., 2007]{haustepig2007}
Hauschke, D., Steinijans, V., and Pigeot, I. (2007).
\newblock {\em Bioequivalence Studies in Drug Development Methods and
  Applications. Statistics in Practice}.
\newblock John Wiley \& Sons, New York.

\bibitem[Heller et~al., 2016]{helleretal2016}
Heller, G., Seshan, V.~E., Moskowitz, C.~S., and Gönen, M. (2016).
\newblock {Inference for the difference in the area under the ROC curve derived
  from nested binary regression models}.
\newblock {\em Biostatistics}, 18(2):260--274.

\bibitem[Jachno et~al., 2019]{jachno2019non}
Jachno, K., Heritier, S., and Wolfe, R. (2019).
\newblock Are non-constant rates and non-proportional treatment effects
  accounted for in the design and analysis of randomised controlled trials? a
  review of current practice.
\newblock {\em BMC medical research methodology}, 19(1):1--9.

\bibitem[Liu et~al., 2009]{liubrehaywynn2009}
Liu, W., Bretz, F., Hayter, A.~J., and Wynn, H.~P. (2009).
\newblock Assessing non-superiority, non-inferiority of equivalence when
  comparing two regression models over a restricted covariate region.
\newblock {\em Biometrics}, 65(4):1279--1287.

\bibitem[Liu et~al., 2007]{liuhaywyn2007}
Liu, W., Hayter, A.~J., and Wynn, H.~P. (2007).
\newblock Operability region equivalence: simultaneous confidence bands for the
  equivalence of two regression models over restricted regions.
\newblock {\em Biometrical Journal}, 49(1):144--150.

\bibitem[McCaw et~al., 2019]{Zachary2019}
McCaw, Z.~R., Yin, G., and Wei, L.-J. (2019).
\newblock Using the restricted mean survival time difference as an alternative
  to the hazard ratio for analyzing clinical cardiovascular studies.
\newblock {\em Circulation}, 140(17):1366--1368.

\bibitem[M{\"o}llenhoff et~al., 2022]{mollenhoff2022testing}
M{\"o}llenhoff, K., Dette, H., and Bretz, F. (2022).
\newblock Testing for similarity of binary efficacy--toxicity responses.
\newblock {\em Biostatistics}, 23(3):949--966.

\bibitem[Ostrovski, 2017]{ostr2017}
Ostrovski, V. (2017).
\newblock Testing equivalence of multinomial distributions.
\newblock {\em Statistics \& Probability Letters}, 124:77--82.

\bibitem[Pepe et~al., 2013]{pepeetal2013}
Pepe, M.~S., Kerr, K.~F., Longton, G., and Wang, Z. (2013).
\newblock Testing for improvement in prediction model performance.
\newblock {\em Statistics in Medicine}, 32(9):1467--1482.

\bibitem[Royston and Parmar, 2013]{Royston2013}
Royston, P. and Parmar, M.~K. (2013).
\newblock Restricted mean survival time: an alternative to the hazard ratio for
  the design and analysis of randomized trials with a time-to-event outcome.
\newblock {\em BMC Medical Research Methodology}, 13(1):152.

\bibitem[Shapiro, 1990]{shapiro1990concepts}
Shapiro, A. (1990).
\newblock On concepts of directional differentiability.
\newblock {\em Journal of optimization theory and applications},
  66(3):477--487.

\bibitem[Shapiro, 1991]{shapiro1991asymptotic}
Shapiro, A. (1991).
\newblock Asymptotic analysis of stochastic programs.
\newblock {\em Annals of Operations Research}, 30(1):169--186.

\bibitem[{U.S. Food and Drug Administration}, 2003]{food2003guidance}
{U.S. Food and Drug Administration} (2003).
\newblock Guidance for industry: bioavailability and bioequivalence studies for
  orally administered drug products-general considerations.
\newblock {\em Food and Drug Administration, Washington, DC}.
\newblock available at
  \url{http://www.fda.gov/downloads/Drugs/GuidanceComplianceRegulatoryInformation/Guidances/ucm070124.pdf}.

\bibitem[Van~der Vaart, 2000]{van2000asymptotic}
Van~der Vaart, A.~W. (2000).
\newblock {\em Asymptotic Statistics}.
\newblock Cambridge University Press.

\end{thebibliography}

\end{document}